\documentclass[reqno,12pt]{amsart}
\usepackage[utf8]{inputenc}
\usepackage{amssymb}
\usepackage{amsmath, amsthm, amsfonts}
\usepackage{color}

\usepackage[a4paper]{geometry}

\usepackage[colorlinks=true,linkcolor=blue,citecolor=blue,urlcolor=blue,breaklinks]{hyperref}
\hypersetup{pdftitle={The Fokker-Planck equation for bosons in 2D: well-possedness and asymptotic analysis}}
\hypersetup{pdfauthor={José A. Cañizo, José A. Carrillo, Philippe Laurençot and Jesús Rosado}}

\usepackage[all]{xy}


\usepackage{breakurl}


\newtheorem{thm}{Theorem}[section]

\newtheorem{lem}[thm]{Lemma}
\newtheorem{prp}[thm]{Proposition}

\theoremstyle{definition}
\newtheorem{dfn}[thm]{Definition}
\theoremstyle{remark}
\newtheorem{rem}[thm]{Remark}
\theoremstyle{example}

\newcommand{\be}{\begin{equation}}
\newcommand{\ee}{\end{equation}}
\newcommand{\ba}{\begin{eqnarray}}
\newcommand{\ea}{\end{eqnarray}}

\newcommand{\dv}{\mathrm{div}}

\newcommand{\p}{\partial}

\def\N{\mathbb{N}}

\def\R{\mathbb{R}}

\def\a{\alpha}

\def\d{\delta}

\def\calF{\mathcal{F}}

\def\rmd{\mathrm{d}}
\def\rme{\mathrm{e}}

\def \d{\,\mathrm{d}}
\def \ddt{\frac{\mathrm{d}}{\mathrm{d}t}}

\def\ird{\int_{\R^d}}

\DeclareMathOperator{\Lip}{Lip}

%
 

%



\title[The Fokker-Planck equation for bosons in 2D]{The Fokker-Planck equation for bosons in 2D: {well-posedness} and asymptotic behavior}

\author{Jos\'e A. Ca\~nizo}
\address{\textbf{Jos\'{e} A.  Ca\~{n}izo}, Departamento de Matem\'atica
  Aplicada, Universidad de Granada, 18071 Granada, Spain}
\email{\href{mailto:canizo@ugr.es}{canizo@ugr.es}}

\author{Jos\'e A. Carrillo}
\address{\textbf{Jos\'{e} A.  Carrillo},
  Department of Mathematics, Imperial College London, South Kensington
  Campus, London SW7 2AZ, UK}
\email{\href{mailto:carrillo@imperial.ac.uk}{carrillo@imperial.ac.uk}}

\author{Philippe Lauren\c{c}ot}
\address{\textbf{Philippe Lauren\c{c}ot}, {Institut de Math\'ematiques de Toulouse, UMR~5219, Universit\'e de Toulouse, CNRS,} F–31062 Toulouse Cedex 9, France}
\email{\href{mailto:laurenco@math.univ-toulouse.fr}{laurenco@math.univ-toulouse.fr}}

\author{Jes\'us Rosado}
\address{\textbf{Jes\'{u}s Rosado}, Departamento de Matem\'atica, Universidad de Buenos Aires}
\email{\href{mailto:jrosado@dm.uba.ar}{jrosado@dm.uab.ar}}

\begin{document}

\begin{abstract}
  We show that solutions of the 2D Fokker-Planck equation for
  bosons are defined globally in time and converge to equilibrium, and
  this convergence is shown to be exponential for radially symmetric
  solutions. The main observation is that a variant of the Hopf-Cole
  transformation relates the 2D equation in radial coordinates to the
  usual linear Fokker-Planck equation. Hence, radially symmetric
  solutions can be computed analytically, and our results for general
  (non radially symmetric) solutions follow from comparison and
  entropy arguments. In order to show convergence to equilibrium we
  also prove a version of the Csiszár-Kullback inequality for the
  Bose-Einstein-Fokker-Planck entropy functional.
\end{abstract}


\maketitle

\tableofcontents

\section{Introduction}
\label{sec:intro}

Introduced in the early 1900s \cite{planck1917, fokker1914},
Fokker-Planck equations have demonstrated to be a powerful tool to
study macroscopic qualitative properties of systems of many
interacting particles. They are particularly relevant in the field of
statistical mechanics since the probability laws of Ornstein-Uhlenbeck
or Langevin processes follow this type of partial differential
equation \cite{gardiner, Risken, bib:f1,PhysRevA.37.4419}. They are
usually obtained as grazing collision limits of the Boltzmann equation
or expansions from the master equation in classical \cite{goudon,DV}
and in quantum contexts
\cite{PhysRevA.31.3761,PhysRevD.45.2843,lux,bib:k, bib:rk,AT}, and
{find applications} in multiple { areas}
\cite{CPT,CCT,GH}. In this way, nonlinear Fokker-Planck equations
appear naturally as simplified kinetic equations with typically the
same equilibrium states as the more physically grounded Boltzmann-type
kinetic or master equations, but keeping some of the essential features and difficulties.

In this paper, we work on the particular case of the model introduced
in the nineties by Kaniadakis and Quarati \cite{Kaniadakis93} to
describe the distribution of quantum particles in a spatially
homogeneous setting leading to possible concentrations in finite
time. From the physics point of view, it is quite relevant due to its
relation to the Bose-Einstein condensate phenomena
\cite{Escobedo2001471, Escobedo2004208,citeulike:3197638}. Different
models have also been proposed to study this process, mostly based
upon the Schr\"odinger equation and its Gross-Pitaevskii variants
\cite{AH08,ESY10,S01}.

The basic idea is to introduce a nonlinear Fokker-Planck operator such
that the Bose-Einstein distributions are their only possible steady
states. These distributions are the family of functions of the form
\begin{equation}
  \label{eq:stationary}
  f_\infty^\beta(v) := \frac{1}{\beta \exp(|v|^2 / 2) - 1},
  \qquad v \in \R^d,
\end{equation}
where $\beta \geq 1$ is a parameter. They obviously satisfy the
equation
$$
\nabla_v \log\left(\frac{f_\infty^\beta}{1+f_\infty^\beta}\right){(v)} + v =0 \quad \Longleftrightarrow \quad
\nabla_v f_\infty^\beta{(v)} + v f_\infty^\beta(1+f_\infty^\beta){(v)}=0 
$$
for all $\beta> 1$. Clearly, for each $\beta>1$  $f_\infty^\beta$ is a stationary solution to the following initial value problem for $f = f(t,v)$, with
$t \geq 0$ and $v \in \R^d$:
\begin{subequations}
  \label{befp}
  \begin{align}
    \label{befp-pde}
    \p_t f = \Delta f + \dv \big(vf(1+f)\big), &\qquad t > 0,\ v \in
    \R^d,
    \\
    \label{befp-ic}
    f(0,v) = f_0(v), &\qquad v \in \R^d.
  \end{align}
\end{subequations}
The \emph{mass} at time $t$ of a solution $f$ to eq. \eqref{befp} is
defined by its integral, and it is a conserved quantity of the time
evolution, that is,
$$
\int_{\R^d} f(t,v) \d v = m := \int_{\R^d} f_0(v) \d v\ , \qquad t>0\ .
$$
Problem~\eqref{befp} is referred to as the Cauchy problem for the
\emph{Bose-Einstein-Fokker-Planck} (BEFP) equation or the
\emph{Fokker-Planck equation for bosons}. It is easy to check that the
functions with $\beta>1$ given by \eqref{eq:stationary} are the only
possible finite-mass smooth stationary solutions of \eqref{befp}.

In dimension $d \geq 3$ there are no stationary solutions with mass
larger than that of $f_\infty^1$, while in dimensions $d = 1,2$ the
mass of $f_\infty^1$ is infinite and there is exactly one stationary
solution for each fixed finite mass. We define the \emph{critical
  mass} $m_c$ by
\begin{equation}
  \label{eq:critical-mass}
  {m_c} := \int_{\R^d} f_\infty^1(v) \d v
  = \int_{\R^d} \frac{1}{\exp(|v|^2 / 2) - 1} \d v,
\end{equation}
which is finite in dimensions $d \geq 3$ and equal to $\infty$ if
$d=1,2$. The mass of $f_\infty^\beta$ is actually explicitly computable in
dimension $d=2$:
\begin{equation*}
  \int_{\R^2} f_\infty^\beta(v) \d v
  = 2 \pi \log\left( \frac{\beta}{\beta - 1} \right)
  \quad \text{ for $\beta > 1$.}
\end{equation*}
Since it is expected that solutions to \eqref{befp} converge to an
equilibrium with the same mass if available, one
  anticipates that solutions with mass $m \leq m_c$ are
globally defined for all $t \in [0,\infty)$, while it is
  conjectured that solutions to \eqref{befp} with mass 
  $m > m_c$ will blow up at a certain finite time $T^*$.

Despite its simple formulation, very little can be said about the
Cauchy problem for the BEFP equation \eqref{befp} beyond the local
existence of solutions \cite{crs08,clr08,T11}, which can be proved by
usual fixed-point arguments.  It was proved by Toscani \cite{T11}, via
a contradiction argument involving the second moment, that in
dimension $d \geq 3$ there are indeed certain solutions which blow up
if the initial mass is sufficiently large or, for any given
supercritical mass, if the initial second moment is small enough. The
nature of the blow-up and its profile is an open question in
$d\geq 3$. The numerical approach becomes then of even greater
interest in clarifying the long time asymptotics of quantum models
related to Bose-Einstein distributions, see for instance
\cite{MR3296281,Kaniadakis93}. We finally remark that in one dimension
a direct minimization of the entropy functional for related equations
can lead to singular parts in the equilibrium states
\cite{BGT}.

We focus here on the $2$-dimensional case, where the critical mass $m_c$ is infinite and show that smooth solutions exist
globally in time under certain mild conditions on the initial data. We
always consider non-negative solutions to \eqref{befp}. One of the
main difficulties that equation~\eqref{befp} presents when attempting
to show the existence of global solutions is the lack of a priori
estimates that would allow to control the $L^p$-norm of solutions for
{ any} $p > 1$.

Our analysis is based on the following change of variables, where, for the sake of clarity we shall omit, in general, the
  dependence on $t$ and $r$ or $v$ of the different quantities that we
  will introduce. Consider a radially symmetric smooth solution
$f(t,v) = |v|^{1-d} \varphi(t,|v|)$ to equation~\eqref{befp}, with
finite mass $m = \|f_0\|_{L^1} > 0$. Then $\varphi = \varphi(t,r)$
satisfies, for $t, r > 0$,
\begin{equation}
  \label{eq:radialbefp}
  \p_t \varphi
  = \p_r^2 \varphi
  + \p_r \left( \left( r - \frac{d-1}{r} \right) \varphi \right)
  + \p_r \left( \frac{1}{r^{d-2}} \varphi^2 \right).
\end{equation}
We notice that, since $f$ is bounded at $v=0$, $\varphi$ satisfies
that $\lim_{r \to 0} r^{-k}\varphi(t, r) = 0$ for all $k < d - 1$. Since
$f$ is radially symmetric and smooth, $\nabla f(t,0) = 0$ and $\varphi$ also
satisfies the boundary condition
\begin{equation*}
  \p_r \left( r^{1-d} \varphi \right) = 0
  \quad \text{ at $r=0$.}
\end{equation*}
Expanding the derivative one sees that in particular $\lim_{r \to 0} \p_r \varphi(t,r) = 0$. We now define $\Phi(t,r)$ as the indefinite integral of $\varphi$:
\begin{equation}
  \label{ch1}
  \Phi(t,r):=\int_0^r \varphi(t,s) \d s.
\end{equation}
That is, $d \omega_d \Phi$ is the mass of $f$ in a ball of radius $r$,
where $\omega_d$ is the volume of the $d$-dimensional unit ball. One
sees that $\Phi$ satisfies
\begin{equation}
  \label{eq:varg} 
  \left\{
    \begin{array}{l}
      \displaystyle{
        \p_t \Phi = \p_r^2 \Phi + \left(r-\frac{d-1}{r}\right) \p_r \Phi
        + r^{2-d}\left|\p_r \Phi \right|^2},
      \\
      \\
      \displaystyle{
        \Phi(t,\infty)= {\frac{m}{d \omega_d}}}.
    \end{array}
  \right.
\end{equation}
From now on, fix the dimension $d=2$. Then the coefficient of the last
term on the right hand side of equation~\eqref{eq:varg} becomes
independent of $r$ and we can use the Hopf-Cole transformation. We
introduce the new quantity
\begin{equation}
  \label{ch2}
  \Psi(t,r) = e^{\Phi(t,r)}-1.
\end{equation}
Then, equation~\eqref{eq:varg} becomes:
\begin{equation}
  \label{eq:varh}
  \left\{
    \begin{array}{l}
      \displaystyle{
        \p_t \Psi = \p_r^2 \Psi
        +
        \left(r-\frac{1}{r}\right)
        \p_r \Psi,
      }
      \\
      \\
      \displaystyle{\Psi(t,\infty)= {e^{m/d\omega_d}-1,}
      }
    \end{array}
  \right.
\end{equation}
in which the quadratic term has been absorbed by the second derivative of $\Psi$ with respect to $r$. We finally introduce the change
\begin{equation}
  \label{ch3}
  \psi(t,r) := \partial_r \Psi(t,r),
\end{equation}
which leads us to
\begin{equation}
  \label{eq:varu}
  \p_t \psi
  =
  \p_r^2 \psi + \p_r \left( \left( r - \frac{1}{r} \right) \psi \right).
\end{equation}
At this point we only need to notice that equation~\eqref{eq:varu} coincides with the radial form of the 2D linear Fokker-Planck (FP) equation, namely
\begin{subequations}
  \label{fp}
  \begin{align}
  \label{fp-pde}
  \p_t g = \Delta g + \dv(v g),
  &\qquad t > 0, \ v \in \R^2,
  \\
  \label{fp-ic}
  g(0,v) = g_0(v),
  &\qquad v \in \R^2.
  \end{align}
\end{subequations}
More specifically, if $g$ is radially
  symmetric and satisfies \eqref{fp}, then the function
  $\psi =\psi(t,r)$ defined by $\psi(t,|v|)=|v| g(t,v)$ solves
  \eqref{eq:varu}. Hence, all the existing theory for equation
  \eqref{fp} \cite{Risken, frank2006nonlinear} can be used and
translated back to the BEFP equation \eqref{befp} through the
different changes of variable. This gives in particular an
analytic expression for radially symmetric solutions of \eqref{befp}
and directly yields an existence and uniqueness theory for the (BEFP)
equation \eqref{befp} in dimension $2$, as well as information about
the asymptotic behavior of its solutions.

The above changes of variable are summarized in the following diagram:
\begin{equation*}
  \xymatrixcolsep{3.5pc}
  \xymatrix{
    f \ar[r]^{rf = \varphi(r)} 
    &
    \varphi \ar[r]^{\int_0^r \varphi}
    &
    \Phi \ar[r]^{e^\Phi-1}
    &
    \Psi \ar[r]^{\partial_r \Psi}
    &
    \psi \ar[r]^{rg = \psi(r)}
    &
    g \ar@/^1.5pc/[lllll]^{f = \Lambda(g)}
  }
\end{equation*}

\medskip The rest of the paper is organized as follows. In
Section~\ref{changevar} we make precise the translation between
solutions to the Bose-Einstein-Fokker-Planck equation \eqref{befp} and
solutions to the linear Fokker-Planck equation \eqref{fp}. Next, in
Section~\ref{radials} we present our main results on radially
symmetric solutions, Theorems~\ref{thm:BEFP-radial}
and~\ref{thm:befp-asymptotic-radial}. Then, in Section~\ref{general}
we extend these results to more general solutions by comparison with
radial solutions, and state our main { results} for non-radially
symmetric solutions in Theorems~\ref{thm:BEFP}
and~\ref{thm:BEFP-asymptotic}. We state the Csisz\' ar-Kullback
inequality for the Bose-Einstein-Fokker-Planck entropy in
Lemma~\ref{lem:ck}, which is needed for the proof of
Theorem~\ref{thm:BEFP-asymptotic}.


\section{The change of variables}\label{changevar}

We will now make precise the equivalence between radially symmetric
solutions of the FP equation \eqref{fp} and those of the BEFP equation
\eqref{befp}. This will hold for classical solutions of BEFP, as given
in the next definition. We denote by $\mathcal{C}^1_t\mathcal{C}^2_v$
the space of functions $f = f(t,v)$ on $(0,\infty) \times \R^2$ for
which $f$, $\partial_t f$, and all partial derivatives of order up to
$2$ with respect to $v$ exist and are continuous on
$(0,\infty) \times \R^2$.
A \textsl{measure} is always understood to be a non-negative Borel measure with finite total mass {and the space of such measures is denoted by $\mathcal{M}_+(\mathbb{R}^2)$}.

\begin{dfn}[Classical solution to BEFP]
  \label{dfn:befp-solution}
  Let $f_0$ be a nonnegative Borel measure in $\mathbb{R}^2$ (not
  necessarily finite). We say that a function $f$ defined on
  $(0,\infty) \times \R^2$ is a \emph{classical solution} to the
  Cauchy problem {for the BEFD equation} \eqref{befp} with
  initial condition $f_0$ when $f = f(t,v)$ belongs to
  $\mathcal{C}^1_t \mathcal{C}^2_v$, equation \eqref{befp-pde} holds
  in $(0,\infty) \times \R^2$, and $\lim_{t \to 0} f(t) = f_0$ in the
  weak-$*$ sense of measures on $\R^2$.
\end{dfn}

We define a classical solution to the Cauchy problem for the FP equation \eqref{fp} analogously. Note that classical solutions of the FP equation are known to be unique for measures with finite total mass as initial data via the change of variables to the heat equation, see next section and \cite{CT98}. However, since our change of variables works for locally finite measures, we prefer to state the concept in this more general setting. 

\medskip

The change of variables \eqref{ch1}, \eqref{ch2}, and \eqref{ch3} can be summarized as follows: the non-negative, radially symmetric functions
$f$ and $g$ on $\R^2$ are related by
\begin{equation}
  \label{eq:radial-functions}
  |v| f(t,v) = \varphi(t,|v|),
  \quad
  |v| g(t,v) = \psi(t,|v|),
  \qquad t > 0, \ v \in \R^2 ,
\end{equation}
with
\begin{equation}
  \label{transform-phi-u}
  \varphi(t,r)
  := \p_r \log \left( 1 + \int_0^r \psi(t,s)\d s \right)
  \quad \text{ for } t > 0,\ r \geq 0 ,
\end{equation}
or equivalently
\begin{equation}
  \label{transform-u-phi}
  \psi(t,r)
  := \p_r
    \exp\left( \int_0^r \varphi(t,s)\d s\right)
  \quad \text{ for } t > 0,\ r \geq 0 .
\end{equation}
This is also equivalent to
\begin{equation}
  \label{eq:transform-equiv}
  \varphi(t,r)
  = \frac{\psi(t,r)}{1 + \int_0^r \psi(t,s) \d s},
  \qquad
  \psi(t,r)
  = \varphi(t,r) \exp \left( \int_0^r \varphi(t,s)\d s \right),
\end{equation}
which may be a more convenient form for some calculations.

\medskip 

We may also adapt the change of variables
\eqref{eq:radial-functions}--\eqref{transform-u-phi} to work on locally finite non-negative measures (i.e., measures which are finite on any compact set), so that we can consider a measure {as} initial condition. If $f$ and $g$ are non-negative, locally finite, radially
symmetric measures on $\R^2$ then one may define measures $\varphi$ and $\psi$ on $[0,\infty)$ by
\begin{equation}
  \label{eq:ell-psi}
  \varphi([0,r]) := \frac{1}{2\pi} f \left(\overline{B(0,r)}\right),
  \quad
  \psi([0,r]) := \frac{1}{2\pi} g \left(\overline{B(0,r)}\right),
  \qquad r \geq 0.
\end{equation}
Conversely, if $\varphi$ and $\psi$ are radially symmetric measures on $[0,\infty)$, then there are unique radially symmetric measures $f$ and $g$ on $\R^2$ which satisfy \eqref{eq:ell-psi}. When $f$ and $g$ are
functions, $\varphi$ and $\psi$ are just defined by
\eqref{eq:radial-functions}.

The change \eqref{transform-phi-u}--\eqref{transform-u-phi} may be extended to non-negative, locally finite measures by setting
\begin{align}
  \label{eq:transform-psi-ell}
  &\varphi([0,r])
  = \log \left( 1 + \psi([0,r]) \right)
  \quad \text{ for } r \geq 0,
  \\
  \label{transform-ell-psi}
  &\psi([0,r])
  = 
  \exp\left( \varphi([0,r]) \right) - 1
  \quad \text{ for } r \geq 0.
\end{align}
One can check that the change
\eqref{eq:ell-psi}--\eqref{transform-ell-psi} coincides with
\eqref{eq:radial-functions}--\eqref{transform-u-phi} when $f$ and $g$
(and hence also $\varphi$ and $\psi$) are locally integrable
functions.

Hence, by setting $\Lambda(g) = f$ where $f$ is related to $g$ through $\varphi$ and $\psi$, see equations~\eqref{eq:ell-psi}--\eqref{transform-ell-psi}, we define a transformation $\Lambda$ on the set of non-negative, radially symmetric, locally finite measures on $\R^2$.  We observe that from \eqref{eq:transform-psi-ell}--\eqref{transform-ell-psi} one easily deduces that $\Lambda$ and $\Lambda^{-1}$ are continuous in the weak-$*$ topology of measures. We will often consider $\Lambda$ restricted to the set $\mathcal{M}_{+,\text{rad}}(\R^2)$ of non-negative, radially symmetric, finite measures on $\R^2$. In that set, the masses (total variation) of $f$ (denoted by $m$) and $g$ (denoted by $M$) are related by
\begin{equation}
  \label{eq:Mm}
  m
  =
  2 \pi \log\left( 1+ \frac{M}{2 \pi} \right),
  \qquad
  M = 2 \pi \left( e^{m/2\pi} - 1\right).
\end{equation}
The transformation $\Lambda$ is also continuous in $L_{\text{rad}}^1(\R^2)$, the set of integrable and radially symmetric functions in $\R^2$:

\begin{lem}
  \label{lem:L1-cont}
  The change $\Lambda$ is a bijection in $L_{\text{rad}}^1(\R^2)$,
  and both $\Lambda$ and $\Lambda^{-1}$ are locally Lipschitz
  continuous in the $L^1$-norm.
\end{lem}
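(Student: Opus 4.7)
The plan is to reduce the statement to a one-dimensional claim on $L^1(0,\infty)$ and then establish Lipschitz bounds by elementary algebraic splittings. Since $f$ and $g$ are radially symmetric, passing to polar coordinates yields the isometric identifications $\|f\|_{L^1(\R^2)} = 2\pi\|\varphi\|_{L^1(0,\infty)}$ and $\|g\|_{L^1(\R^2)} = 2\pi\|\psi\|_{L^1(0,\infty)}$. Hence the statement is equivalent to showing that the nonnegative cone of $L^1(0,\infty)$ is mapped bijectively into itself by the two maps in \eqref{eq:transform-equiv}, and that both maps are locally Lipschitz in the $L^1$ norm.

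For the bijection part, I would observe directly from \eqref{eq:transform-equiv} that $\varphi \le \psi$ whenever $\psi\ge0$, so $\psi \in L^1$ implies $\varphi \in L^1$, and conversely $\psi = \varphi \exp(\int_0^r \varphi)$ satisfies $\|\psi\|_1 \le e^{\|\varphi\|_1}\|\varphi\|_1$. That the two formulas are mutual inverses is verified pointwise by differentiating $1 + \int_0^r \psi = \exp(\int_0^r \varphi)$. For the Lipschitz estimates, let $\Psi_i(r) := \int_0^r \psi_i(s)\,ds$. The algebraic identity
\begin{equation*}
  \varphi_1 - \varphi_2
  = \frac{\psi_1 - \psi_2}{1 + \Psi_2}
  + \frac{\psi_1\,(\Psi_2 - \Psi_1)}{(1+\Psi_1)(1+\Psi_2)}
\end{equation*}
together with $\Psi_i \ge 0$ and $\|\Psi_1 - \Psi_2\|_\infty \le \|\psi_1 - \psi_2\|_1$ yields
\begin{equation*}
  \|\varphi_1 - \varphi_2\|_1 \le \bigl(1 + \|\psi_1\|_1\bigr) \|\psi_1 - \psi_2\|_1 .
\end{equation*}
For the reverse direction, setting $F_i(r) := \int_0^r \varphi_i$, the analogous splitting
$\psi_1 - \psi_2 = (\varphi_1 - \varphi_2) e^{F_1} + \varphi_2 (e^{F_1} - e^{F_2})$, combined with the elementary bound $|e^{F_1} - e^{F_2}| \le e^{R} |F_1 - F_2|$ where $R := \max(\|\varphi_1\|_1, \|\varphi_2\|_1)$, gives
\begin{equation*}
  \|\psi_1 - \psi_2\|_1 \le e^R (1 + R) \|\varphi_1 - \varphi_2\|_1 .
\end{equation*}

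There is no real obstacle here; the argument is a routine exercise in handling two nonlinear algebraic maps between positive $L^1$ functions. The only point to watch is that nonnegativity of $\varphi$ and $\psi$ is genuinely used in two places: it keeps the denominators $1+\Psi_i$ bounded below by $1$, and it lets us bound the primitives $F_i$ uniformly by $\|\varphi_i\|_1$. Both Lipschitz constants above depend only on the $L^1$ norms of the inputs, which is precisely what is meant by local Lipschitz continuity.
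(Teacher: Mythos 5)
Your proof is correct and follows essentially the same route as the paper's: both split the difference $\varphi_1-\varphi_2$ into a term controlled by $\|\psi_1-\psi_2\|_1$ and a term controlled by $\|\Psi_1-\Psi_2\|_\infty\le\|\psi_1-\psi_2\|_1$, using nonnegativity to bound the denominators below by $1$. The only difference is cosmetic: you write out the $\Lambda^{-1}$ direction explicitly (with the bound $e^R(1+R)$), whereas the paper dismisses it as ``a very similar argument.''
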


\begin{proof}
  That $\Lambda$ is a bijection in $L_{\text{rad}}^1(\R^2)$ is clearly seen from \eqref{eq:transform-equiv}, for example. Take $g_1$ and  $g_2$ in {$L_{\text{rad}}^1(\R^2)$} and set $f_i = \Lambda(g_i)$, $|v| \varphi_i(|v|) = f_i(v)$, and $|v| \psi_i(|v|) = g_i(v)$ for $i=1,2$ and $v\in\R^2$. Set $F_i(r) = (1 + \int_0^r
  \psi_i(s) \d s)^{-1}$ for $i=1,2$ and notice that 
  \begin{equation*}
 0 \le F_i(r) \le 1 \;\;\text{ and }\;\;   |F_1(r) - F_2(r)| \leq \int_0^r |\psi_1(s) -
    \psi_2(s)| \d s\ , \quad r\ge 0 .
  \end{equation*}
  Combining these inequalities with \eqref{eq:transform-equiv} gives
  \begin{align*}
    \frac{1}{2\pi} \|f_1 - f_2\|_1
    & =
    \int_{0}^\infty |\varphi_1(r) - \varphi_2(r)| \d r
    =
    \int_{0}^\infty | F_1(r) \psi_1(r) - F_2(r) \psi_2(r)| \d r
    \\
    & \leq
    \int_{0}^\infty
    F_1(r)
    \left| \psi_1(r) - \psi_2(r) \right|  \d r
    +
    \int_{0}^\infty
    \left| F_1(r) - F_2(r) \right|
    \psi_2(r) \d r
    \\
    & \leq
    \int_{0}^\infty
    \left| \psi_1(r) - \psi_2(r) \right| \d r
    +
    \int_{0}^\infty
    \int_0^r \left| \psi_1(s) - \psi_2(s)  \right| \psi_2(r)
    \d s \d r
    \\
    &\leq \frac{1}{2\pi} \|g_1-g_2\|_1
    + \frac{1}{4\pi^2} \|g_1-g_2\|_1 \|g_2\|_1,
  \end{align*}
which shows that $\Lambda$ is locally Lipschitz continuous in the $L^1$-norm. A very similar argument shows that $\Lambda^{-1}$ is also locally Lipschitz continuous.
\end{proof}

This change of variables is also very regular, in the sense that it
preserves the smoothness of the function to which it is applied. This will enable us later to translate properties of the FP equation \eqref{fp} to the BEFP equation \eqref{befp} in dimension~2:

\begin{lem}
\label{lem:C2-cont}
The change $\Lambda$ is a bijection from the set of radially symmetric functions in $\mathcal{C}^2(\R^2)$ to itself (in the sense that $\Lambda(g)$ coincides a.e.~with a function in $\mathcal{C}^2(\R^2)$). In a similar way, if $g \in \mathcal{C}^1_t\mathcal{C}^2_v$ with $g(t)$ radially symmetric for each $t > 0$ and we define $f(t,v) = \Lambda(g(t))(v)$ then $f \in \mathcal{C}^1_t\mathcal{C}^2_v$. 
\end{lem}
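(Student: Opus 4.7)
The plan is to work on radial profiles via the reformulation \eqref{eq:transform-equiv}, dividing through by $r = |v|$. Throughout, I will use the standard criterion that a radial function $v \mapsto \widetilde F(|v|)$ belongs to $\mathcal{C}^2(\R^2)$ if and only if $\widetilde F \in \mathcal{C}^2([0,\infty))$ and $\widetilde F'(0) = 0$; necessity follows from $\nabla(\widetilde F(|v|))(0) = 0$, and sufficiency from the identity
\[ \partial_i \partial_j \bigl(\widetilde F(|v|)\bigr) = \delta_{ij}\, \frac{\widetilde F'(|v|)}{|v|} + \frac{v_i v_j}{|v|^2}\left( \widetilde F''(|v|) - \frac{\widetilde F'(|v|)}{|v|}\right) \]
combined with the fact that $\widetilde F'(r)/r \to \widetilde F''(0)$ as $r \to 0^+$ when $\widetilde F'(0) = 0$.

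For the first assertion, take a radial $g \in \mathcal{C}^2(\R^2)$ and write $g(v) = G(|v|)$ with $G \in \mathcal{C}^2([0,\infty))$ and $G'(0) = 0$. Since $\psi(r) = r G(r)$, inserting into \eqref{eq:transform-equiv} and dividing by $r$ identifies the radial profile of $f = \Lambda(g)$ as
\[ F(r) = \frac{G(r)}{1 + H(r)}, \qquad H(r) := \int_0^r s\, G(s)\, \mathrm{d}s. \]
Since $G \ge 0$ one has $1 + H \ge 1$, so $F \in \mathcal{C}^2([0,\infty))$; and an elementary computation using $H'(0) = 0$ and $G'(0) = 0$ gives $F'(0) = 0$. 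Hence $f(v) = F(|v|)$ belongs to $\mathcal{C}^2(\R^2)$ by the criterion above. The inverse direction $\Lambda^{-1}$ is entirely parallel: from \eqref{eq:transform-equiv}, a radial $f(v) = F(|v|)$ with $F \in \mathcal{C}^2([0,\infty))$ and $F'(0) = 0$ is mapped to $g$ with radial profile $G(r) = F(r)\exp(K(r))$, where $K(r) := \int_0^r s F(s)\, \mathrm{d}s$; the exponential is smooth, $K \in \mathcal{C}^3([0,\infty))$, and $G'(0) = F'(0)\exp(K(0)) + F(0) K'(0) \exp(K(0)) = 0$.

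For the time-dependent statement, the same formulas apply with $t$ carried as a parameter, and uniform-in-$t$ spatial regularity is as above. Joint continuity of $\partial_t g$ on $(0,\infty) \times \R^2$ permits differentiation under the integral in $H(t,r) = \int_0^r s\, G(t,s)\, \mathrm{d}s$, so $\partial_t H$ is jointly continuous and $\mathcal{C}^2$ in $v$; the quotient rule then yields $\partial_t f$ jointly continuous, which is exactly what $\mathcal{C}^1_t\mathcal{C}^2_v$ demands. The main obstacle will be the regularity at $v = 0$: the reformulation of $\Lambda$ involves the factor $1/r$, and one must verify that the apparent singularity cancels. This cancellation is precisely what the boundary conditions $G'(0) = 0$ and $H'(0) = 0$ enforce, and these conditions in turn are what the $\mathcal{C}^2$-hypothesis on $g$ across the origin of $\R^2$ delivers.
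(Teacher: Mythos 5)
Your proposal is correct and follows essentially the same route as the paper: both reduce to the one-dimensional radial profile, invoke the same regularity criterion for radial functions (the paper isolates it as Lemma~\ref{c2radial}, including the identical formula for $\partial_i\partial_j$), and verify that the profile of $\Lambda(g)$ — written in the paper as $g(v)\,(1+\int_0^{|v|}\psi)^{-1}$ and by you as the equivalent quotient $G/(1+H)$ — is $\mathcal{C}^2$ up to $r=0$ with vanishing derivative there, thanks to $G'(0)=H'(0)=0$. The treatment of $\Lambda^{-1}$ and of the time-dependent case is likewise the same in substance.
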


Before proving this we mention an elementary lemma that we wish to
make explicit. We denote by $\mathcal{C}^2([0,\infty))$ the space of twice differentiable real functions on $[0,\infty)$ such that all
derivatives of order up to $2$ have limits at $0$.

\begin{lem}[Regularity of a radial function]
  \label{c2radial}
Take $\phi\in\mathcal{C}^2([0,\infty))$ such that
  \begin{equation}
    \lim_{r \to 0} \phi'(r) = 0 . \label{pim}
  \end{equation}
  If we define a radial function $f \colon \mathbb{R}^d \to
  \mathbb{R}$ by $f(v) = \phi(|v|)$, then $f\in
  \mathcal{C}^2(\mathbb{R}^d)$. Conversely, if $f\in \mathcal{C}^2(\mathbb{R}^d)$ is a radially symmetric function, then the function $\phi \colon [0,\infty) \to \mathbb{R}$ defined by $\phi(r) := f(r,0,\ldots,0)$ belongs to $\mathcal{C}^2([0,\infty))$ and satisfies \eqref{pim}.
\end{lem}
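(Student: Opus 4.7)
The plan is to treat the two directions of the bijection separately. For both, the only subtle point is what happens at the origin: on $\R^d\setminus\{0\}$ the map $v\mapsto|v|$ is $\mathcal{C}^\infty$, so $f(v)=\phi(|v|)$ is automatically $\mathcal{C}^2$ away from $0$ whenever $\phi\in\mathcal{C}^2((0,\infty))$, and conversely any $\mathcal{C}^2$ function on $\R^d$ restricts to a $\mathcal{C}^2$ function on the half-line $\{(r,0,\dots,0):r>0\}$.

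For the forward direction, I would compute the derivatives of $f(v)=\phi(|v|)$ on $\R^d\setminus\{0\}$ directly:
\begin{equation*}
\partial_i f(v)=\phi'(|v|)\,\frac{v_i}{|v|},\qquad
\partial_i\partial_j f(v)=\Bigl(\phi''(|v|)-\frac{\phi'(|v|)}{|v|}\Bigr)\frac{v_i v_j}{|v|^2}+\frac{\phi'(|v|)}{|v|}\,\delta_{ij}.
\end{equation*}
The key ingredient is that, since $\phi'(0)=0$ and $\phi''$ is continuous at $0$, the quotient $\phi'(r)/r$ extends continuously to $r=0$ with value $\phi''(0)$ (write $\phi'(r)=\int_0^r\phi''(s)\,\mathrm{d}s$ and divide). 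Hence $\partial_i f$ extends continuously to $\R^d$ with value $0$ at the origin, and the bracketed prefactor of $v_iv_j/|v|^2$ in the second formula vanishes at the origin, giving a continuous extension of $\partial_i\partial_j f$ with value $\phi''(0)\delta_{ij}$ at $v=0$.

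It remains to verify that these continuous extensions are genuinely the classical partial derivatives of $f$ at $0$. For the first derivatives I would use that $\phi'(0)=0$ to get $\partial_i f(0)=\lim_{h\to0}(\phi(|h|)-\phi(0))/h=0$ directly; once this is known, the continuity of $\partial_i f$ on all of $\R^d$ gives $f\in\mathcal{C}^1$. To pass to the second derivatives, I would rely on the standard mean-value-theorem fact that a function with partial derivatives defined on a neighborhood of a point and continuous at that point is differentiable there: applied to $\partial_j f$, whose derivatives $\partial_i\partial_j f$ we have just shown extend continuously to $0$, this yields $\partial_j f\in\mathcal{C}^1(\R^d)$ and hence $f\in\mathcal{C}^2(\R^d)$. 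Alternatively one may use the integral Taylor identity $f(v)-f(0)=\int_0^{|v|}(|v|-s)\phi''(s)\,\mathrm{d}s$ to read off the second-order expansion $f(v)=f(0)+\tfrac12\phi''(0)|v|^2+o(|v|^2)$ near the origin.

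The converse is much easier and I would handle it by symmetrization: if $f\in\mathcal{C}^2(\R^d)$ is radial, the function $g(s):=f(s,0,\dots,0)$ lies in $\mathcal{C}^2(\R)$, and radiality forces $g(-s)=g(s)$, i.e.\ $g$ is even. Differentiating the identity $g(-s)=g(s)$ once and evaluating at $0$ gives $g'(0)=-g'(0)$, hence $g'(0)=0$. Restriction of $g$ to $[0,\infty)$ is $\phi$, which therefore lies in $\mathcal{C}^2([0,\infty))$ with $\phi'(0)=0$, as required.

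The main obstacle is in the forward direction: the explicit formula for $\partial_i\partial_j f$ away from $0$ contains two terms each of which blows up like $1/|v|$, and rigorously justifying that their sum has a continuous extension (and, more importantly, that this extension coincides with the classical second partial derivative of $f$ at the origin) is the only point requiring care. The hypothesis $\phi'(0)=0$ is precisely what makes this cancellation work, which is why both its necessity (shown by the converse) and its sufficiency (shown by the forward direction) appear in the statement.
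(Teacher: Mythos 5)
Your argument is correct and follows essentially the same route as the paper: compute the explicit formulas for $\partial_i f$ and $\partial_i\partial_j f$ on $\mathbb{R}^d\setminus\{0\}$, use the hypothesis $\phi'(0)=0$ to see that $\phi'(r)/r\to\phi''(0)$ and hence that the singular terms cancel, and conclude that the derivatives extend continuously to the origin with $\nabla f(0)=0$ and $D^2f(0)=\phi''(0)\,\mathrm{id}$. Your only additions are to spell out, via the mean value theorem, why the continuous extensions are the genuine classical derivatives (a step the paper leaves implicit), and to obtain the converse from evenness of $s\mapsto f(s,0,\dots,0)$ rather than from rotation invariance of $\nabla f(0)$; both are fine.
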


\begin{rem}
  It is easy to extend this lemma also to time-dependent functions:
  for example, if a function $\phi = \phi(t,r)$ is in
  $\mathcal{C}^1_t\mathcal{C}^2_r((0,\infty) \times [0,\infty))$ and
  satisfies {
  \begin{equation*}
    \lim_{(s,r) \to (t,0)} \partial_r \phi(s,r) = 0 
  \end{equation*} }
  for all $t > 0$, then $f(t,v) := \phi(t,|v|)$ is in
  $\mathcal{C}^1_t\mathcal{C}^2_v$.
\end{rem}

\begin{proof}[Proof of Lemma \ref{c2radial}]
Clearly $f\in \mathcal{C}^2(\R^d\setminus\{0\})$ and we only need to check that the first and second-order derivatives of $f$ can be extended continuously at $v=0$. For $1\le i,j\le d$ and $v\in \mathbb{R}^d$, $v\ne 0$,
$$\partial_i f(v) = \phi'(|v|) \frac{v_i}{|v|}\ , \quad \partial_{i} \partial_j f(v) = \delta_{ij}\phi''(|v|)
  + \left(\frac{v_iv_j}{|v|^2}-\delta_{ij}\right)
  \left(\phi''(|v|) - \frac{\phi'(|v|)}{|v|}\right)\ .
$$
Since $v\mapsto v_i/|v|$ and $v\mapsto v_i v_j/|v|^2$ are bounded and
$$
\lim_{r\to 0} \left( \phi''(r) - \frac{\phi'(r)}{r} \right) = \phi''(0)-\phi''(0)=0
$$
by \eqref{pim} we conclude that
$$
\lim_{v\to 0} \partial_i f(v) = 0 \;\;\text{ and }\;\; \lim_{v\to 0} \partial_i \partial_j f(v) = \phi''(0) \delta_{ij}\ .
$$
Consequently, $f$ can be extended to a $\mathcal{C}^2$-smooth function in $\R^d$ by setting $\nabla f(0)=0$ and $D^2 f(0) = \phi''(0) \mathrm{id}$.

The converse is obvious as the $\mathcal{C}^2$-regularity of $\phi$ readily follows from that of $f$ while the radial symmetry of $f$ guarantees that $\nabla f(0)=0$ and thus \eqref{pim}.
\end{proof}

\begin{proof}[Proof of Lemma~\ref{lem:C2-cont}]
  Take a radially symmetric function $g \in \mathcal{C}^2(\R^2)$ and let us show that $f = \Lambda(g)\in \mathcal{C}^2(\R^2)$ (the proof for $\Lambda^{-1}$ is similar). Clearly $f$ is $\mathcal{C}^2$-smooth in $\R^2 \setminus \{0\}$, since the change of variables has a smooth expression away from $v = 0$. In order to see that $f$ is twice differentiable at $v=0$, we use \eqref{eq:radial-functions} and \eqref{eq:transform-equiv} to write $f$ as
  $$f(v) = \frac{\varphi(|v|)}{|v|} 
  = \frac{\psi(|v])}{|v|} \left( 1+\int_0^{|v|}\psi(s)ds \right)^{-1}
  = g(v) F(|v|),
$$ 
where we have set
  \begin{equation*}
    F(r) := \left( 1+\int_0^{r}\psi(s)ds \right)^{-1}\ , \quad r\ge 0.
  \end{equation*}
  Observe that $\psi(r) = r g(r,0,\ldots,0)$ for $r\ge 0$ and thus belongs to $\mathcal{C}^2([0,\infty))$ with $\psi(0)=0$. then $F'(0)=0$ and we deduce from Lemma~\ref{c2radial} that $f$ extends to a $\mathcal{C}^2$-smooth function in $\R^2$. The time-dependent results can be proved with very little modifications.
\end{proof}

\medskip
We have the following result:

\begin{thm}[Equivalence between BEFP and FP]
  \label{thm:change}
  Let $f_0$ and $g_0$ be non-negative radially symmetric measures on
  $\R^2$ related by \eqref{eq:ell-psi}--\eqref{transform-ell-psi}, and
  let $f$, $g$ be non-negative radially symmetric functions on
  $(0,\infty) \times \R^2$ related by
  \eqref{eq:radial-functions}--\eqref{transform-u-phi}. The following
  are equivalent:
  \begin{enumerate}
  \item \label{it:befp} $f$ is a
  classical solution
    to the Bose-Einstein-Fokker-Planck equation \eqref{befp} with
    initial condition $f_0$.
  \item \label{it:fp} $g$ is a classical solution to the
    linear Fokker-Planck equation \eqref{fp} with initial condition $g_0$.
  \end{enumerate}
\end{thm}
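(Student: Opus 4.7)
The plan is to run the chain of changes of variable $f \leftrightarrow \varphi \leftrightarrow \Phi \leftrightarrow \Psi \leftrightarrow \psi \leftrightarrow g$ step by step and verify at each step that a classical solution of one equation maps to a classical solution of the next. Because every transformation in this chain is an algebraic bijection (pointwise, via \eqref{eq:radial-functions}--\eqref{transform-u-phi}), and is reversible with the same regularity by Lemma~\ref{lem:C2-cont}, the argument is symmetric; I therefore describe the direction \eqref{it:befp} $\Rightarrow$ \eqref{it:fp} and note that the converse is obtained by reading the same computation backwards.

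Assume $f$ is a classical solution of \eqref{befp-pde} and write $f(t,v) = \varphi(t,|v|)/|v|$. By Lemma~\ref{c2radial} applied pointwise in $t$, $\varphi(t,\cdot) \in \mathcal{C}^2([0,\infty))$ with $\varphi(t,0)=0$. Plugging this into \eqref{befp-pde} with $d=2$ and using the radial forms of the Laplacian and the divergence yields \eqref{eq:radialbefp} pointwise on $(0,\infty)\times(0,\infty)$. Since $\varphi(t,0)=0$ and $\varphi(t,\cdot)$ is $\mathcal{C}^2$ up to $r=0$, one can integrate \eqref{eq:radialbefp} in $r$ from $0$ to obtain the Hamilton--Jacobi type equation \eqref{eq:varg} for $\Phi(t,r) := \int_0^r \varphi(t,s)\,ds$; the boundary condition $\Phi(t,\infty) = m/(2\pi)$ is just conservation of mass. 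The Hopf--Cole substitution $\Psi = e^{\Phi}-1$ then converts \eqref{eq:varg} into the linear equation \eqref{eq:varh}, because $\partial_r^2 \Psi = e^{\Phi}(\partial_r^2\Phi + |\partial_r \Phi|^2)$ absorbs the quadratic term. Differentiating \eqref{eq:varh} in $r$ yields \eqref{eq:varu} for $\psi := \partial_r \Psi$; setting $g(t,v) := \psi(t,|v|)/|v|$ and invoking again Lemma~\ref{lem:C2-cont} to ensure $g \in \mathcal{C}^1_t\mathcal{C}^2_v$, equation \eqref{eq:varu} is recognised as the $2$D radial form of \eqref{fp-pde}, so $g$ solves the linear Fokker--Planck equation in the classical sense.

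For the initial data, note that by construction $f(t) = \Lambda(g(t))$ for every $t>0$ and $f_0 = \Lambda(g_0)$. Since $\Lambda$ and its inverse were shown to be continuous in the weak-$*$ topology of non-negative, radially symmetric, locally finite measures, the convergence $f(t) \rightharpoonup^{*} f_0$ as $t \to 0$ is equivalent to $g(t) \rightharpoonup^{*} g_0$. The main obstacle I anticipate is the careful handling of the singular coefficient $(d-1)/r = 1/r$ in the passage from \eqref{eq:radialbefp} to \eqref{eq:varg}: one must combine $\varphi(t,0)=0$ with the fact that $\varphi(t,r)/r$ has a finite limit as $r \to 0$ (equal to $f(t,0)$) to give pointwise meaning to the drift at $r=0$ and to justify integrating in $r$ across the origin. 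Once this is in place, each subsequent substitution is a smooth bijection whose inverse has the same form, and the implication \eqref{it:fp} $\Rightarrow$ \eqref{it:befp} follows by reading the whole computation in reverse.
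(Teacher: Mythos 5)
Your proposal is correct and follows essentially the same route as the paper: run the chain $f\leftrightarrow\varphi\leftrightarrow\Phi\leftrightarrow\Psi\leftrightarrow\psi\leftrightarrow g$ of changes of variables \eqref{eq:radialbefp}--\eqref{eq:varu}, use Lemmas~\ref{c2radial} and~\ref{lem:C2-cont} to control regularity at the origin (the paper phrases the treatment of $v=0$ as a continuity argument after establishing the equation on $\R^2\setminus\{0\}$, while you cancel the boundary terms when integrating from $r=0$, but these amount to the same point), and transfer the initial condition via the weak-$*$ continuity of $\Lambda$. The only cosmetic difference is that you write out the implication \eqref{it:befp}~$\Rightarrow$~\eqref{it:fp} whereas the paper writes out the reverse one; both texts treat the remaining implication as analogous.
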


\begin{proof}
We just show one of the implications, since the reverse one is analogous. Assume point~\eqref{it:fp}, and let us show point~\eqref{it:befp}. Since $g$ is a classical solution to \eqref{fp}, it lies in $\mathcal{C}_t^1\mathcal{C}_v^2$. Hence the calculations \eqref{eq:radialbefp}--\eqref{eq:varu} are rigorous and we see that $f$ satisfies \eqref{befp} in $(0,\infty) \times (\R^2 \setminus \{0\})$ (the point $v=0$ being a possible problem due to the radial change of variables.) But due to Lemma~\ref{lem:C2-cont},
  $f$ belongs to $\mathcal{C}_t^1\mathcal{C}_v^2$ and a continuity argument entails that $f$ satisfies \eqref{befp-pde} in $(0,\infty) \times \R^2$. Finally, $f$ satisfies the initial condition in the weak-$*$ sense since the change $\Lambda$ is continuous for the weak-$*$ convergence of measures.
\end{proof}

Notice that some of the classical solutions given by Theorem~\ref{thm:change} are not covered by any of the existence results in the literature. The change of variables gives us classical solutions to a nonlinear equation with a measure as initial condition,  which is quite remarkable. Finally, let us point out that radially symmetric classical solutions to the FP equation \eqref{fp} are unique as soon as some growth condition at infinity is satisfied. Since the FP equation \eqref{fp} is equivalent to the heat equation as we will see next, then uniqueness is satisfied for initial data in tempered distributions (classical solutions are distributional solutions), in particular for finite mass non-negative measures (which are our main interest).


\section{Global radially symmetric solutions}
\label{radials}

The Fokker-Planck equation \eqref{fp} is equivalent to the heat equation via a self-similar change of variables. Moreover, it can be explicitly solved through the Fourier transform and its fundamental solution can be explicitly found leading to
$$
\calF(t,v,w):=a(t)^{-d/2}M_{\nu(t)}(a(t)^{-1/2}v-w)
$$
with
$$
a(t):=\rme^{-2t}\quad\text{,}\quad\nu(t):=\rme^{2t}-1\quad\text{and}\quad
M_{\lambda}(\xi):=(2\pi \lambda)^{-d/2}\rme^{-|\xi|^2/2\lambda}
$$
for any $\lambda>0$. Let us focus again on the two dimensional case and define the operator $\calF[\xi]$
acting on functions $\xi$ by:
\begin{equation}\label{eq:fpFonSol}
\calF[\xi(w)](t,v) := \int_{\R^2}\calF(t,v,w)\xi(w)\,\rmd w \ , \quad (t,v)\in (0,\infty)\times \R^2\ .
\end{equation}
The following result is well-known, see for example \cite[Lemma~3.1]{CT98}:

\begin{thm}[Solutions to FP]
  \label{thm:FP}
  Let $S'(\R^2)$ denote the space of tempered distributions. Then, given $g_0 \in S'(\R^2)$ there exists a unique weak solution $g\in \mathcal{C}([0,\infty), S'(\R^2))$ to the linear Fokker-Planck equation \eqref{fp} with initial condition $g_0$ given by $g=\calF[g_0]$. In addition, $g$ is smooth on $(0,\infty)\times \R^2$, and positivity and radial symmetry are conserved; more
  precisely:
  \begin{enumerate}
  \item If $g_0$ is not identically equal to zero then $g(t,v) > 0$ for all $t > 0$ and  $v \in \R^2$.
  \item If $g_0$ is radially symmetric then $g(t)$ is
    radially symmetric for all $t > 0$.
  \end{enumerate}
\end{thm}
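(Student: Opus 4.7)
The plan is to reduce the linear Fokker-Planck equation \eqref{fp} to the heat equation on $\R^2$ via a self-similar change of variables, and then transfer the classical theory of the heat equation (existence, uniqueness, smoothing, positivity, preservation of radial symmetry) back to \eqref{fp}. With $a(t)=\rme^{-2t}$ and $\nu(t)=\rme^{2t}-1$ as in the excerpt, one sets
\begin{equation*}
h(\tau,y) := a(t)^{d/2}\, g(t, a(t)^{1/2} y), \qquad \tau := \nu(t)/2 = \frac{\rme^{2t}-1}{2}.
\end{equation*}
A direct computation shows that $g$ solves \eqref{fp-pde} if and only if $h$ solves the heat equation $\partial_\tau h = \Delta_y h$ on $(0,\infty)\times\R^2$, with $h(0,y)=g_0(y)$. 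Moreover, the kernel $\calF(t,v,w)$ introduced in \eqref{eq:fpFonSol} is exactly the pullback of the Gaussian heat kernel under this change, which also explains its explicit Gaussian form in terms of $M_{\nu(t)}$.

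With this reduction in hand, existence and uniqueness in $\mathcal{C}([0,\infty), S'(\R^2))$, together with the representation $g=\calF[g_0]$, follow from the corresponding classical statements for the heat equation on tempered distributions: convolution with the Gaussian defines a $C_0$-semigroup on $S'(\R^2)$, while a Tychonoff-type result ensures uniqueness for the heat equation in $S'$. Continuity in time with values in $S'(\R^2)$ is then inherited by $g$. For smoothness on $(0,\infty)\times\R^2$, the point is that for every $t>0$ and every $v\in\R^2$ the map $w\mapsto \calF(t,v,w)$ lies in the Schwartz space, with joint smooth dependence on $(t,v)\in (0,\infty)\times\R^2$ and all derivatives remaining Schwartz in $w$; one can therefore differentiate $g(t,v)=\langle g_0, \calF(t,v,\cdot)\rangle$ under the tempered-distribution pairing to conclude $g\in\mathcal{C}^\infty((0,\infty)\times\R^2)$.

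For the two additional properties I would argue as follows. (1) Strict positivity is immediate since $\calF(t,v,w)>0$ for all $(t,v,w)$ with $t>0$; if $g_0$ is a nonzero nonnegative measure then
\begin{equation*}
g(t,v) = \int_{\R^2} \calF(t,v,w)\,\rmd g_0(w) > 0
\end{equation*}
for every $t>0$ and $v\in\R^2$. (2) Radial symmetry follows because $\calF(t,v,w)$ depends on $v$ and $w$ only through $|a(t)^{-1/2}v - w|$; for any orthogonal $R\in O(2)$, the substitution $w\mapsto Rw$ combined with the radial symmetry of $g_0$ yields $g(t,Rv)=g(t,v)$, and a density/approximation argument extends this to distributional $g_0$. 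I do not expect any serious obstacle, as all the ingredients are classical and the self-similar reduction is explicit; the only point requiring some care is justifying the distributional pairing $\langle g_0, \calF(t,v,\cdot)\rangle$ and its smoothness in $(t,v)$, which is however routine given the Schwartz-class decay of $\calF(t,v,\cdot)$ in $w$.
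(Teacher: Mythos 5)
Your proposal is correct and follows exactly the route the paper indicates: the paper states this result as well known (citing \cite{CT98}) and prefaces it with precisely the self-similar reduction to the heat equation and the explicit Gaussian kernel $\calF(t,v,w)$ that you work out. Your change of variables $h(\tau,y)=a(t)^{d/2}g(t,a(t)^{1/2}y)$, $\tau=\nu(t)/2$, does transform \eqref{fp-pde} into the heat equation and reproduces the stated kernel, and the positivity and radial-symmetry arguments via the pairing $\langle g_0,\calF(t,v,\cdot)\rangle$ are the standard ones.
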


\begin{rem}\label{rem:thm:FP}
  Since $\mathcal{M}_+(\R^2)$ endowed with the weak-$*$ topology is
  continuously embedded in $S'(\R^2)$, in our context this result
  tells us that, indeed, for any $g_0 \in \mathcal{M}_+(\R^2)$ there
  exists a unique classical solution $g \in \mathcal{C}([0,\infty),
  \mathcal{M}_+(\R^2))$ of the FP equation \eqref{fp} (where we always
  consider the weak-$*$ topology on $\mathcal{M}_+(\R^2)$) and this
  solution satisfies the above properties.
\end{rem}

Finally, let us recall some moment and smoothing properties
of the solution of the FP equation \eqref{fp}. With this aim, let us
introduce the spaces $L^p_\ell(\R^2)$ with norms
$$
\|f\|_{L_{\ell}^p}:=\|(1+|v|^{\ell})f\|_p\quad \text{and}
\quad\|f\|_p:=\left(\int_{\R^d}|f|^p\rmd
v\right)^{\frac{1}{p}}\text{.}
$$
for $1\leq p\leq \infty$ and ${\ell}\geq 0$, with the usual essential
supremum norm definition for the case $p = \infty$. Solutions {$g$} to
equation~\eqref{fp} satisfy
\begin{equation*}\label{smooth}
\|\p_{\a}g(t)\|_{L^p_{\ell}}\leq
\frac{C e^{\left(\frac{2(p-1)}{p}+|\a|\right)t}}{\nu(t)^{\left(\frac{1}{q}-\frac{1}{p}\right)
+\frac{|\a|}{2}}}\|g_0\|_{L^q_{\ell}} ,
\end{equation*}
for all $1\leq q\leq p\leq \infty$, $\ell \geq 0$, $\a \in \N^d$ a
multiindex, and $t>0$. Here, $C > 0$ is a number depending on
$p,q,\ell$ and $\alpha$. For a proof of this result see
\cite[Proposition~A.1]{clr08}. In particular, the case $\alpha=0$ and
$p=q$ gives
\begin{equation}\label{unif}
\|g(t)\|_{L^p_{\ell}}\leq
C e^{\frac{2(p-1)t}{p}} \|g_0\|_{L^p_{\ell}} 
\end{equation}
for all $1\leq p\leq \infty$, ${\ell}\geq 0$, and $t>0$ with $C$
depending only on $p$ and ${\ell}$. The case $\alpha = 0$, $q=1$ gives
\begin{equation}\label{unif1}
  \|g(t)\|_{L^p_{\ell}}
  \leq
  C \left( \frac{e^{2t}}{e^{2t} - 1} \right)^{(p-1)/p}  \|g_0\|_{L^1_{\ell}}\ , \quad t>0\ .
\end{equation}

\medskip
Theorem~\ref{thm:change} immediately implies an analogous result for \emph{radially symmetric} solutions to the BEFP equation \eqref{befp}:

\begin{thm}[Radial solutions to BEFP]
  \label{thm:BEFP-radial}
  For any radially symmetric $f_0 \in \mathcal{M}_+(\R^2)$ there is a
  unique classical solution
  $f \in \mathcal{C}([0,\infty), \mathcal{M}_+(\R^2))$ to the BEFP
  equation \eqref{befp} with initial condition $f_0$ and it is
    given by $f(t) = \Lambda(g(t))$ for all $t\ge 0$ where $g$ is the
    solution to the FP equation \eqref{fp} with initial condition
    $g_0 := \Lambda^{-1}(f_0)$. In addition, $f$ is bounded in
    $L^\infty((\delta,\infty)\times \R^2)$ for each $\delta>0$ and
    $f(t)$ is radially symmetric for all $t\ge 0$. Furthermore, if
  $f_0$ is not identically equal to zero then $f(t,v) > 0$ for
  all $t > 0$ and $v \in \R^2$.
\end{thm}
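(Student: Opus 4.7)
The idea is to transport the well-known theory of the linear Fokker--Planck equation \eqref{fp} through the bijection $\Lambda$, using the equivalence recorded in Theorem~\ref{thm:change}. I would set $g_0 := \Lambda^{-1}(f_0)$, which is a well-defined, radially symmetric, non-negative finite measure on $\R^2$ by the construction in Section~\ref{changevar}, its total mass being given by \eqref{eq:Mm}. Applying Theorem~\ref{thm:FP} together with Remark~\ref{rem:thm:FP} produces a unique solution $g\in\mathcal{C}([0,\infty),\mathcal{M}_+(\R^2))$ of \eqref{fp} with initial datum $g_0$, which is smooth and radially symmetric on $(0,\infty)\times\R^2$ and strictly positive as soon as $g_0\not\equiv 0$. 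Setting $f(t):=\Lambda(g(t))$ for every $t\ge 0$, Theorem~\ref{thm:change} immediately gives that $f$ is a classical solution of \eqref{befp} with initial condition $f_0$, while the continuity $f\in\mathcal{C}([0,\infty),\mathcal{M}_+(\R^2))$ and the radial symmetry of each $f(t)$ follow from the corresponding properties of $g$ and the weak-$*$ continuity of $\Lambda$ noted after \eqref{transform-ell-psi}.

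For the $L^\infty$-bound on $(\delta,\infty)\times\R^2$, I would apply the smoothing estimate \eqref{unif1} with $p=\infty$ and $\ell=0$ to obtain $\|g(t)\|_\infty\le C\,e^{2t}/(e^{2t}-1)\,\|g_0\|$ for $t>0$, where $\|g_0\|$ is the total mass of $g_0$; if a measure-valued initial datum makes the direct application of \eqref{unif1} uncomfortable, one can advance the FP evolution by $\delta/2$ via the explicit representation $g=\calF[g_0]$ (so that $g(\delta/2)$ is an honest integrable function) and then apply \eqref{unif1} on $[\delta/2,\infty)$. Writing the pointwise identity $f(t,v)=g(t,v)\,F(t,|v|)$ with $F(t,r)=(1+\int_0^r \psi(t,s)\,\d s)^{-1}\in(0,1]$, which was already exploited in the proof of Lemma~\ref{lem:C2-cont}, one gets $\|f(t)\|_\infty\le\|g(t)\|_\infty$ and hence the desired bound. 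This same formula yields pointwise positivity: for $v\neq 0$, the strict positivity of $g(t,\cdot)$ and of $F(t,\cdot)$ gives $f(t,v)>0$, while at $v=0$ we have $F(t,0)=1$ and $f(t,0)=g(t,0)>0$, the value at the origin making sense thanks to the continuous extension guaranteed by Lemma~\ref{lem:C2-cont}.

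It remains to establish uniqueness. Given any radially symmetric classical solution $\tilde f\in\mathcal{C}([0,\infty),\mathcal{M}_+(\R^2))$ of \eqref{befp} with initial datum $f_0$, the equivalence in Theorem~\ref{thm:change} applied to $\tilde g:=\Lambda^{-1}(\tilde f)$ shows that $\tilde g$ is a classical solution of \eqref{fp} with initial datum $g_0$, and the uniqueness clause of Theorem~\ref{thm:FP} then forces $\tilde g=g$, so that $\tilde f=\Lambda(\tilde g)=f$. The only delicate point I anticipate here is that the statement does not a priori restrict uniqueness to radially symmetric classical solutions, so one should check that any classical solution of \eqref{befp} starting from a radial measure is automatically radial for all $t>0$. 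This should follow from the rotational invariance of \eqref{befp-pde}: for any rotation $R$ of $\R^2$, $f\circ R$ is again a classical solution with the same radial initial datum, so the radial solution constructed via $\Lambda$ must coincide with each of its rotates, and the a priori broader uniqueness collapses to the radial case already handled.
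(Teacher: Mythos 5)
Your construction is exactly the paper's: set $g_0=\Lambda^{-1}(f_0)$, solve the linear FP equation via Theorem~\ref{thm:FP} and Remark~\ref{rem:thm:FP}, push forward with $\Lambda$ using Theorem~\ref{thm:change} and the weak-$*$ continuity of $\Lambda$, and get the $L^\infty$ bound from \eqref{unif1} together with the pointwise inequality $f\le g$ coming from $F\le 1$; the positivity claim likewise transfers directly. The paper's proof is just a terser version of your first two paragraphs, so on everything the paper actually argues you are in full agreement and correct.

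The one place where you go beyond the paper is the uniqueness discussion, and there your proposed fix is circular: to conclude that an arbitrary classical solution $\tilde f$ satisfies $\tilde f\circ R=\tilde f$ for every rotation $R$ you would already need uniqueness among all (not necessarily radial) classical solutions, which is precisely what you are trying to establish; since \eqref{befp-pde} is nonlinear you also cannot average over rotations to manufacture a radial competitor. To be fair, the paper silently has the same issue --- its proof never addresses uniqueness, and the surrounding text only justifies uniqueness within the radial class via the equivalence with the heat equation --- so you should either restrict the uniqueness claim to radially symmetric classical solutions (which is all that $\Lambda^{-1}$ plus Theorem~\ref{thm:FP} delivers) or supply a genuinely separate argument (e.g.\ an $L^1$-contraction or comparison principle valid for measure-valued initial data) for the non-radial case.
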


\begin{rem}
  This solution is also the unique mild solution in the sense of \cite[Eq.~(2.2)]{clr08} for initial data in the class considered
    therein, since the authors prove that mild solutions are unique
  up to the maximal existence time. Of course, the above
  theorem also shows that the unique mild solution does not blow up
  and is defined for all positive times.
\end{rem}

\begin{proof}
  Consider the solution $g$ to the FP equation \eqref{fp} with initial
  condition $\Lambda^{-1}(f_0)$ given by Theorem~\ref{thm:FP}. This
  solution is also in $\mathcal{C}([0,\infty), \mathcal{M}_+(\R^2))$
  (see Remark~\ref{rem:thm:FP}). Since $g$ is a classical solution to
  the FP equation \eqref{fp}, the function $f := \Lambda(g)$ is a
  classical solution to the BEFP equation \eqref{befp} by
  Theorem~\ref{thm:change}. It also lies in
  $\mathcal{C}([0,\infty), \mathcal{M}_+(\R^2))$, since $\Lambda$
  preserves this space and is continuous with respect to the weak-$*$
  topology of measures. Finally, the claimed boundedness in
    $(\delta,\infty)\times \R^2$ readily follows from \eqref{unif1}.
\end{proof}

We next give another example of the results that one can get by translating known results for the FP equation \eqref{fp}, which deals with the large time behavior. The following result is well known and can be found for example in \cite[Theorem~4.1]{CT98}.

\begin{thm}[Asymptotic behavior for FP]
Take $g_0 \in L^1_+(\R^2)$ and denote the total mass of $g_0$ by $M$. The solution $g$ of the FP equation \eqref{fp} with initial condition $g_0$ satisfies
  \begin{equation*}
    \| g(t) - M g_\infty \|_{L^1}
    \leq
    e^{-t} \| g_0 - M g_\infty \|_{L^1}
    \quad \text{ for all $t \geq 0$,}
  \end{equation*}
  where the equilibrium $g_\infty$ is given
by
\begin{equation*}
  g_\infty(v) = \frac{1}{2\pi} e^{-|v|^2/2},
  \qquad v \in \R^2 .
\end{equation*}
\end{thm}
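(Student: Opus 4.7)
The plan is to use the entropy-dissipation method, adapted to the Ornstein-Uhlenbeck structure of the linear FP equation, combined with the Csisz\'ar-Kullback-Pinsker inequality.

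First, I set $h(t) := g(t) - Mg_\infty$. By linearity of \eqref{fp-pde} and since $Mg_\infty$ is a stationary solution of \eqref{fp}, $h$ solves \eqref{fp-pde} with initial datum $h_0 := g_0 - Mg_\infty$. Mass conservation together with $\int_{\R^2} g_0 \d v = M = \int_{\R^2} Mg_\infty \d v$ yields $\int_{\R^2} h(t,v) \d v = 0$ for every $t \geq 0$. Introduce $u(t,v) := g(t,v)/(Mg_\infty(v))$, a probability density with respect to $g_\infty$. A direct computation (using $\nabla g_\infty = -v g_\infty$ and $\Delta g_\infty = (|v|^2 - 2)g_\infty$) shows that $\partial_t u = \Delta u - v\cdot\nabla u$, and the relative entropy $\mathcal{H}(t) := \int_{\R^2} u(t)\log u(t)\, g_\infty \d v$ satisfies $\mathcal{H}'(t) = -\int_{\R^2} |\nabla u|^2/u\, g_\infty \d v$ after an integration by parts.

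Next, I invoke the Gaussian logarithmic Sobolev inequality with its sharp constant, namely $2\mathcal{H}(t) \leq \int_{\R^2} |\nabla u|^2/u\, g_\infty \d v$. Inserting this into the entropy identity provides the differential inequality $\mathcal{H}'(t) \leq -2\mathcal{H}(t)$, hence $\mathcal{H}(t) \leq e^{-2t}\mathcal{H}(0)$ by Gronwall. The Csisz\'ar-Kullback-Pinsker inequality applied to $u(t)$ (viewed as a probability density with respect to $g_\infty$) then yields $\int_{\R^2} |u(t) - 1|\, g_\infty \d v \leq \sqrt{2\mathcal{H}(t)} \leq e^{-t}\sqrt{2\mathcal{H}(0)}$. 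Translating back via $\|g(t) - Mg_\infty\|_{L^1} = M\int_{\R^2} |u(t) - 1|\, g_\infty \d v$ produces exponential decay with rate $e^{-t}$.

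The main obstacle is to obtain the sharp $L^1$-constant $\|g_0 - Mg_\infty\|_{L^1}$ on the right-hand side, since the entropy route naturally produces the weaker constant $M\sqrt{2\mathcal{H}(0)}$. To close this gap I would complement the entropy argument with a direct analysis based on the explicit Ornstein-Uhlenbeck kernel
$\calF(t,v,w) = (2\pi(1-e^{-2t}))^{-1}\exp(-|v-e^{-t}w|^2/(2(1-e^{-2t})))$: the representation $h(t,v) = \int_{\R^2} \calF(t,v,w) h_0(w) \d w$ combined with the zero-mean condition $\int_{\R^2} h_0 \d w = 0$ permits rewriting $h(t,v) = \int_{\R^2}[\calF(t,v,w) - \calF(t,v,0)] h_0(w) \d w$, and the dependence of $\calF$ on $w$ through the damped translation $e^{-t}w$ produces the $e^{-t}$ factor needed for the stated contraction in $L^1$. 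Care is required to make this last step uniform in $t \geq 0$ (rather than only asymptotically as $t \to \infty$).
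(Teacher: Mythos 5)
The paper does not actually prove this theorem: it is quoted as well known with a pointer to \cite[Theorem~4.1]{CT98}, and what that reference provides is precisely the relative-entropy version of the result, i.e.\ essentially your Step~1: the decay $\mathcal{H}(t)\le e^{-2t}\mathcal{H}(0)$ via the Gaussian logarithmic Sobolev inequality followed by Csisz\'ar--Kullback--Pinsker, which gives $\|g(t)-Mg_\infty\|_{L^1}\le M\sqrt{2\mathcal{H}(0)}\,e^{-t}$. That part of your argument is correct (modulo the implicit assumption $\mathcal{H}(0)<\infty$, which does not follow from $g_0\in L^1_+(\R^2)$ alone), and you are right to flag that it produces the constant $M\sqrt{2\mathcal{H}(0)}$ rather than $\|g_0-Mg_\infty\|_{L^1}$.

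The genuine gap is in the proposed repair, and it cannot be closed. Your kernel argument needs $\sup_w\int_{\R^2}|\calF(t,v,w)-\calF(t,v,0)|\d v\le e^{-t}$; but $\calF(t,\cdot,w)$ is the Gaussian with mean $e^{-t}w$ and covariance $(1-e^{-2t})$ times the identity, so for each fixed $t>0$ this integral tends to $2$ as $|w|\to\infty$. The ``damped translation'' only yields the bound $\min\{2,\,C e^{-t}|w|(1-e^{-2t})^{-1/2}\}$, so the argument requires a finite first moment of $h_0$ and returns a moment-dependent constant, not $\|h_0\|_{L^1}$. In fact the inequality with the sharp $L^1$ constant is false as stated: take $g_0$ of mass $M$ concentrated near the point $(R,0)$ with $R\gg 1$; then $g(t)\approx M\,\calF(t,\cdot,(R,0))$ is a near-unit-variance Gaussian centred at distance $e^{-t}R$ from the origin, and at $t=\tfrac12\log R$ the two bumps $g(t)$ and $Mg_\infty$ are essentially disjoint, so the left-hand side is close to $2M$ while the right-hand side is at most $2MR^{-1/2}$. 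Thus no proof strategy can deliver the statement in this exact form for all $g_0\in L^1_+(\R^2)$; the correct citable versions carry either the entropy constant $M\sqrt{2\mathcal{H}(0)}$ (your Step~1) or a first-moment-dependent constant, and your plan should stop at Step~1 and record the constant it actually produces.
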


Notice that if we apply the change \eqref{eq:radial-functions}--\eqref{transform-phi-u} to $M g_\infty$ we obtain precisely $f_\infty^\beta$, with
\begin{equation}
  \label{eq:beta-mass}
  \beta = \frac{2\pi}{M} + 1.
\end{equation}
Thanks to Theorem~\ref{thm:change} and Lemma~\ref{lem:L1-cont}, this
translates into the following result for the BEFP equation \eqref{befp}
\begin{thm}[Asymptotic behavior for radial BEFP]
  \label{thm:befp-asymptotic-radial}
  Take a radially symmetric $f_0 \in L^1_+(\R^2)$ and denote its mass
  by $m$. There is a constant $K = K(m)>0$, depending only on $m$,
  such that the solution $f$ of the BEFP equation \eqref{befp} with
  initial condition $f_0$ satisfies
  \begin{equation*}
    \| f(t) - f_\infty^\beta \|_{L^1}
    \leq
    K e^{-t} \| f_0 - f_\infty^\beta \|_{L^1}
    \quad \text{ for all $t \geq 0$},
  \end{equation*}
  choosing $\beta$ to be such that $f_\infty^\beta$ has mass $m$:
  \begin{equation}
    \beta
    = \left( 1 - e^{-m / (2\pi)} \right)^{-1} . \label{eq:beta-mass-b}
  \end{equation}
\end{thm}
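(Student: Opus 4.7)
The plan is to transfer the FP asymptotic estimate to the BEFP equation through the bijection $\Lambda$, using the local Lipschitz continuity proved in Lemma~\ref{lem:L1-cont} to convert $L^1$-distances in both directions.

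First I would set $g_0 := \Lambda^{-1}(f_0)$, whose total mass is $M = 2\pi(e^{m/2\pi}-1)$ by \eqref{eq:Mm}. By Theorem~\ref{thm:BEFP-radial}, $f(t)=\Lambda(g(t))$ where $g$ is the solution to \eqref{fp} with initial datum $g_0$ given by Theorem~\ref{thm:FP}. The observation recorded just before \eqref{eq:beta-mass} shows that $\Lambda(Mg_\infty)=f_\infty^\beta$ with $\beta=(1-e^{-m/(2\pi)})^{-1}$, so $Mg_\infty$ is the image of the equilibrium $f_\infty^\beta$ under $\Lambda^{-1}$.

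Next I would invoke the FP asymptotic theorem to get
\begin{equation*}
\|g(t) - Mg_\infty\|_{L^1} \le e^{-t}\|g_0 - Mg_\infty\|_{L^1}, \qquad t\ge 0.
\end{equation*}
Since mass is conserved along the linear Fokker-Planck flow, $\|g(t)\|_{L^1}=M$ for all $t\ge 0$, and $\|Mg_\infty\|_{L^1}=M$ as well; hence both $g(t)$ and $Mg_\infty$ stay in a fixed $L^1$-ball whose radius depends only on $m$. The estimate derived in the proof of Lemma~\ref{lem:L1-cont} (applied to $g_1=g(t)$, $g_2=Mg_\infty$) then gives a constant $K_1=K_1(m)$ such that
\begin{equation*}
\|f(t) - f_\infty^\beta\|_{L^1} = \|\Lambda(g(t)) - \Lambda(Mg_\infty)\|_{L^1} \le K_1(m)\,\|g(t) - Mg_\infty\|_{L^1}.
\end{equation*}

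Finally, since $f_0$ and $f_\infty^\beta$ both lie in the closed $L^1$-ball of radius $m$, the local Lipschitz continuity of $\Lambda^{-1}$ from Lemma~\ref{lem:L1-cont} yields a constant $K_2=K_2(m)$ with $\|g_0 - Mg_\infty\|_{L^1}\le K_2(m)\,\|f_0 - f_\infty^\beta\|_{L^1}$. Chaining the three inequalities delivers the theorem with $K(m):=K_1(m)K_2(m)$. The only point requiring mild care is verifying that the Lipschitz constants depend only on $m$; this is immediate here because mass conservation along the FP flow and the explicit relation \eqref{eq:Mm} confine all the relevant functions to $L^1$-balls whose radii depend solely on $m$. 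No substantive obstacle is expected beyond this bookkeeping.
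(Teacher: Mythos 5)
Your proposal is correct and follows essentially the same route as the paper: the authors likewise obtain the result by combining the exponential $L^1$-decay for the linear FP equation with the local Lipschitz continuity of $\Lambda$ and $\Lambda^{-1}$ from Lemma~\ref{lem:L1-cont}, and their remark even records the same constant $K(m)=\bigl(\Lip_M(\Lambda)\bigr)\bigl(\Lip_m(\Lambda^{-1})\bigr)$ with the Lipschitz constants taken on the mass-determined $L^1$-balls, exactly as in your bookkeeping.
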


\begin{rem}
  The constant $K(m)$ above can be taken to be
  \begin{equation*}
    K(m) = \left( \Lip_M(\Lambda) \right) \left( \Lip_m (\Lambda^{-1}) \right),
  \end{equation*}
  where $\Lip_R$ denotes the $L^1$-Lipschitz constant in the
  $L^1$-ball of radius $R$ (see Lemma~\ref{lem:L1-cont}.)
\end{rem}

Another interesting consequence of Theorem~\ref{thm:change} is that
radially symmetric solutions to the BEFP equation \eqref{befp} have an explicit expression in
terms of the initial condition $f_0$, obtained by transforming the
explicitly computable solution $g$ of the FP equation \eqref{fp} through the
change of variables
\eqref{eq:radial-functions}--\eqref{eq:transform-equiv}:
\begin{equation*}
 f(t,v)
  = \frac{g(t,v)}{1 + \int_0^{|v|} s \psi(t,s) \d s}, \quad (t,v)\in (0,\infty)\times \R^2\ ,
\end{equation*}
with $\psi$ related to $g$ by \eqref{eq:radial-functions}. This relation obviously implies that, for $(t,v)\in (0,\infty)\times \R^2$, 
\begin{equation*}
\frac{2\pi}{2\pi+\|g_0\|_{L^1}} g(t,v) \leq f(t,v) \leq g(t,v) \le f(t,v) e^{\|f_0\|_{L^1}/2\pi}
\end{equation*}
by the conservation of the $L^1$-norm. This readily translates bounds
for $g$ into bounds for $f$; indeed, on the one hand, equation~\eqref{unif} gives
\begin{equation}
  \label{unif2}
  \|f(t)\|_{L^p_{\ell}}
  \leq
  C e^{\frac{2(p-1)t}{p}} \|g_0\|_{L^p_{\ell}}
  \leq
  C e^{\frac{2(p-1)t}{p}}\ e^{\|f_0\|_{L^1}/2\pi}\ \|f_0\|_{L^p_{\ell}}
\end{equation}
for all $1\leq p\leq \infty$, $\ell \geq 0$, and $t>0$ with $C$ depending only on $p$ and $\ell$. On the other hand, equation~\eqref{unif1} gives 
\begin{eqnarray}
 \|f(t)\|_{L^p_{\ell}}
 & \leq &
  C {\left( \frac{e^{2t}}{e^{2t}-1} \right)^{(p-1)/p}} \|g_0\|_{L^1_{\ell}} \nonumber \\  
& \leq &  C {\left( \frac{e^{2t}}{e^{2t}-1} \right)^{(p-1)/p} e^{\|f_0\|_{L^1}/2\pi}}\ \|f_0\|_{L^1_{\ell}}.  \label{unif4}
\end{eqnarray}
These bounds on $f$ will be used in the next section
in order to show that solutions to \eqref{befp} starting from general inital data are globally defined in time.

\medskip

An interesting particular solution is the transform of the
solution to the FP equation \eqref{fp} with initial condition the Dirac mass $\delta_0$ at $v=0$ which may be referred to as a \textsl{fundamental solution} for the (nonlinear) BEFP equation \eqref{befp}:
\begin{equation*}
  f(t,v)
  =
  \frac{1}{\vartheta(t)}
  \left[
    (2\pi + 1) e^{|v|^2/2\vartheta(t)} - 1
  \right]^{-1},
  \quad \text{ for $v \in \R^2$, $t \geq 0$},
\end{equation*}
with
\begin{equation*}
  \vartheta(t) := (1-e^{-2t})
  \quad \text{ for $t \geq 0$}.
\end{equation*}

  There is also an explicit infinite-mass solution to the BEFP
  equation \eqref{befp} obtained by transforming the solution $g(t,v) = A e^{2t}$, $A>0$,
  to the FP equation \eqref{fp}:
  \begin{equation*}
    f(t,v)
    =
    {2} \left( {2} A^{-1} e^{-2t} + |v|^2 \right)^{-1},
    \quad \text{ for $v \in \R^2$, $t \geq 0$}.
  \end{equation*}
  Strictly speaking, this solution does not satisfy Definition
 ~\ref{dfn:befp-solution} because it is not integrable; however, it
  satisfies \eqref{befp-pde} for all $(t,v) \in (0,\infty) \times
  \R^2$. This solution converges exponentially fast to an
  infinite-mass stationary solution $f_\infty^*(v) := |v|^{-2}$.


\section{Global (non-radially symmetric) solutions}
\label{general}

In the case of Fermi-Dirac interactions, where the nonlinear drift in \eqref{befp} is negative, a local well-posedness theory and $L^1$-contraction and maximum principles were proved in \cite{clr08}. However, the proofs are totally independent of the sign of the drift and can be directly translated to our case:

\begin{prp}[Summary of results from \cite{clr08}]
  \label{befp-results}
  Let $f_0 \in (L^1\cap L^p_{\ell})(\R^2)$ be non-negative, with $p>2$ and $\ell\geq 1$. Then:
  \begin{description}\label{prp:sumCLR08}

  \item[Local Existence] There exists a maximal time
      $T\in (0,\infty]$ depending only on $\|f_0\|_{L_{\ell}^p}$ and
      $\|f_0\|_{L^1}$ such that there is a unique solution $f$ in
    $\mathcal{C}([0,T),(L_{\ell}^p\cap L^1)(\R^2))$ of the BEFP
    equation \eqref{befp} which is non-negative and satisfies
  \begin{equation*}
  \|f(t)\|_{L^1} = \|f_0\|_{L^1}\ , \qquad t\in [0,T)\ . 
  \end{equation*} 
  If $T<\infty$ then $\|f(t)\|_{L_{\ell}^p}\longrightarrow \infty$ as
$t\to T$.

\item[Contractivity and Comparison Principle] Let
    $T\in (0,\infty]$ and $f_1$, $f_2$
    $\in \mathcal{C}([0,T),(L_{\ell}^p\cap L^1)(\R^2))$ be two
    solutions to the BEFP equation \eqref{befp} with initial data
    $f_{1,0}$ and $f_{2,0}$ in $(L_{\ell}^p\cap L^1)(\R^2)$,
    respectively. Then
    $$
    \|f_1(t) - f_2(t)\|_{L^1} \leq \|f_0 - g_0\|_{L^1}\ , \quad t\in [0,T)
    $$
    Furthermore, if $f_{1,0}\leq f_{2,0}$ then $f_1(t,v)\leq f_2(t,v)$
    for all $t\in [0,T)$ and $v\in \R^2$.
  \end{description}
\end{prp}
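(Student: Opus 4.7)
The plan is to adapt verbatim the arguments of \cite{clr08}, which treat the Fermi-Dirac equation $\p_t f = \Delta f + \dive(vf(1-f))$ by a Duhamel/contraction scheme that is blind to the sign of the cubic drift. Writing \eqref{befp-pde} as
$$\p_t f - \Delta f - \dive(vf) = \dive(vf^2),$$
and denoting by $S(t)$ the linear Fokker-Planck semigroup (given by the kernel $\calF$ of Section~\ref{radials}), a mild solution is a fixed point of
$$\Gamma[f](t) := S(t) f_0 + \int_0^t S(t-s)\,\dive\!\bigl(v f(s)^2\bigr)\,ds.$$
I would set up a closed ball in $X_T := \mathcal{C}([0,T]; (L^1\cap L^p_\ell)(\R^2))$ centered at $S(\cdot)f_0$, use the smoothing bound on $S(t)\dive$ from \cite[Proposition~A.1]{clr08} (which provides the time-integrable singularity $\nu(t)^{-1/2}$ together with $L^p_\ell$ control) and the bilinearity of $f\mapsto vf^2$ to check that $\Gamma$ is a contraction on $X_T$ for $T$ small, depending only on $\|f_0\|_{L^1}$ and $\|f_0\|_{L^p_\ell}$. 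Nonnegativity of the fixed point follows from a Stampacchia-type truncation (multiplying by $(-f)^+$ after a cut-off in $v$), and the blow-up alternative $\|f(t)\|_{L^p_\ell}\to\infty$ as $t\to T<\infty$ is standard. Mass conservation follows by integrating \eqref{befp-pde} in $v$ and using that $\Delta f + \dive(vf(1+f))$ is a total divergence whose volume integral vanishes, thanks to the moment bound provided by $\ell\ge 1$ and $p>2$.

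For the $L^1$-contraction and the comparison principle I would subtract the equations for two solutions $f_1,f_2$. The difference $w:=f_1-f_2$ solves the linear non-autonomous equation
$$\p_t w = \Delta w + \dive\!\bigl(v\,w\,(1+f_1+f_2)\bigr).$$
Testing against $\mathrm{sign}_\eta(w)\chi_R(v)$, where $\mathrm{sign}_\eta$ is a smooth regularization of the signum and $\chi_R$ a cut-off at $|v|=R$, integrating by parts and using the Kato inequality $\mathrm{sign}(w)\Delta w\le \Delta|w|$, one passes to the limit $\eta\to 0$ and then $R\to\infty$ (the latter step being where the weighted integrability $\ell\ge 1$ is essential, in order to absorb the unbounded drift $v(1+f_1+f_2)$ at infinity) to obtain $\ddt\|w(t)\|_{L^1}\le 0$. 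Replacing $|w|$ by $w^+$ throughout yields that $f_{1,0}\le f_{2,0}$ implies $f_1(t,\cdot)\le f_2(t,\cdot)$ for all $t\in [0,T)$.

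The main technical point is thus the bookkeeping required to justify the integrations by parts in the presence of the unbounded drift $v(1+f_1+f_2)$; as the authors indicate, all these calculations appear in \cite{clr08} for the Fermi-Dirac case, and the only change is the sign in front of the cubic drift, which enters neither the contraction-mapping argument nor the Kato-type manipulation. For this reason I would not reproduce the computation and would content myself with the citation above.
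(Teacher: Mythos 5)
Your proposal is correct and matches the paper's treatment: the paper offers no proof of Proposition~\ref{befp-results}, simply citing \cite{clr08} for the Fermi--Dirac case and observing that the arguments there (the Duhamel fixed-point scheme with the Fokker--Planck semigroup smoothing estimates, and the Kato-inequality argument for $L^1$-contraction and comparison) are independent of the sign of the quadratic drift. Your sketch is a faithful reconstruction of exactly those arguments, including the correct identification of the equation satisfied by $w=f_1-f_2$ and the role of $p>2$ and $\ell\ge 1$.
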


Thus one can deduce the following result from Theorem~\ref{thm:BEFP-radial}
and {Proposition~\ref{prp:sumCLR08}}:

\begin{thm}
  \label{thm:BEFP}
  Take a non-negative initial condition $f_0 \in
  L^\infty_{\ell}(\R^2)$ with ${\ell}>2$. Then there is a unique
  solution $f \in \mathcal{C}([0,\infty), L^\infty_{\ell}(\R^2))$ (in
  the sense of \cite{clr08}) of the BEFP equation \eqref{befp} with
  initial condition $f_0$. In addition, if $\|f_0\|_{L^1_\ell} <
 \infty$, there is some $C > 0$ which depends only on $f_0$ such
  that
  \begin{equation}
  \| f(t) \|_{ L^1_{\ell}} +  \| f(t) \|_{ L^\infty_{\ell}}
    \leq C
    \quad \text{ for all $t \ge 1$.} \label{poum}
  \end{equation}
\end{thm}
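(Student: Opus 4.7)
My plan is to dominate $f$ from above by a radial classical solution $\tilde f$ to which Theorem~\ref{thm:BEFP-radial} applies directly, and then to close a simple moment ODE for the $L^1_\ell$ bound. Since $f_0 \in L^\infty_\ell(\R^2)$ with $\ell > 2$, the pointwise estimate $f_0(v) \le \|f_0\|_{L^\infty_\ell}(1+|v|^\ell)^{-1}$ yields $f_0 \in L^1(\R^2)$ and, by interpolation, $f_0 \in L^p_{\ell'}(\R^2)$ for any $(p,\ell')$ with $p > 2$, $\ell' \ge 1$, and $p\ell' + 2 < p\ell$; such a pair exists since $\ell > 2$ (take for instance $\ell' = 1$ and $p$ close enough to $2$). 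Proposition~\ref{prp:sumCLR08} therefore yields a unique maximal classical solution $f \in \mathcal{C}([0,T), L^1 \cap L^p_{\ell'}(\R^2))$ with initial datum $f_0$.

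Set $\tilde f_0(v) := \|f_0\|_{L^\infty_\ell}(1+|v|^\ell)^{-1}$, which is non-negative, radially symmetric, and belongs to $L^1 \cap L^p_{\ell'} \cap L^\infty_\ell$. Theorem~\ref{thm:BEFP-radial} supplies a unique global classical radial solution $\tilde f \in \mathcal{C}([0,\infty), L^\infty_\ell(\R^2))$ starting from $\tilde f_0$, and the comparison part of Proposition~\ref{prp:sumCLR08} forces $0 \le f(t,v) \le \tilde f(t,v)$ on $[0,T) \times \R^2$. Applying \eqref{unif2} to $\tilde f$ gives $\|\tilde f(t)\|_{L^\infty_\ell} \le C \rme^{2t}\|\tilde f_0\|_{L^\infty_\ell}$ on every finite interval, so the pointwise domination controls $\|f(t)\|_{L^p_{\ell'}}$ on every $[0, T']$ as well; the blow-up criterion in Proposition~\ref{prp:sumCLR08} then forces $T = \infty$, and $f \in \mathcal{C}([0,\infty), L^\infty_\ell(\R^2))$ follows from the pointwise domination and the continuity of $\tilde f$.

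For the uniform bound~\eqref{poum}, assume additionally $f_0 \in L^1_\ell(\R^2)$. The $L^\infty_\ell$ bound for $t \ge 1$ is obtained via $\tilde f \le \tilde g := \Lambda^{-1}(\tilde f)$, a direct consequence of \eqref{eq:transform-equiv}: the explicit Mehler representation of the radial FP solution $\tilde g$ shows that $\|\tilde g(t)\|_{L^\infty_\ell}$ is bounded uniformly on $[1,\infty)$ by a constant depending only on $\|\tilde g_0\|_{L^1}$, and $f \le \tilde f \le \tilde g$ transfers this bound to $f$. The $L^1_\ell$ bound comes from a direct moment identity: multiplying \eqref{befp-pde} by $|v|^\ell$ and integrating by parts (justified by the decay on $f$ already obtained) gives
\begin{equation*}
\ddt \int_{\R^2}|v|^\ell f\d v = \ell^2 \int_{\R^2}|v|^{\ell-2} f\d v - \ell\int_{\R^2}|v|^\ell f\d v - \ell\int_{\R^2}|v|^\ell f^2\d v.
\end{equation*}
Discarding the favourable nonlinear term and using Young's inequality $|v|^{\ell-2} \le \varepsilon|v|^\ell + C_\varepsilon$ with $\varepsilon := 1/(2\ell)$, together with mass conservation, reduces this to $\ddt \int|v|^\ell f\d v \le -\tfrac{\ell}{2}\int|v|^\ell f\d v + C(\ell)\|f_0\|_{L^1}$, and Gronwall yields $\int|v|^\ell f(t)\d v \le \max\{\int|v|^\ell f_0\d v,\, 2C(\ell)\|f_0\|_{L^1}/\ell\}$ for all $t \ge 0$.

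The main obstacle is matching function spaces: one has to place $f_0$ simultaneously in $L^1$ and in $L^p_{\ell'}$ with $p>2$ and $\ell' \ge 1$ so that Proposition~\ref{prp:sumCLR08} applies, and the assumption $\ell > 2$ is used precisely for this. Once this is achieved, the nonlinear drift contributes only favourably to the moment estimate, and the construction is driven entirely by the radial majorant and the heat-kernel-type smoothing of the Fokker-Planck equation behind it.
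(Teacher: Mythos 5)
Your construction of the global solution is essentially the paper's: the radial majorant $\tilde f_0(v)=\|f_0\|_{L^\infty_\ell}(1+|v|^\ell)^{-1}$, Theorem~\ref{thm:BEFP-radial} for its global solution, the comparison principle of Proposition~\ref{befp-results}, and \eqref{unif2} with $p=\infty$ to defeat the blow-up criterion. That part is correct. For \eqref{poum} you depart from the paper (which simply invokes \eqref{unif4} with $p=1$ and $p=\infty$), and the two halves fare differently. The $L^1_\ell$ half via the moment identity is a sound alternative and is self-contained: in $d=2$ one has $\Delta |v|^\ell=\ell^2|v|^{\ell-2}$ and the drift contributes $-\ell\int |v|^\ell f(1+f)\d v$, so your differential inequality and the Gronwall conclusion are right. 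To make it fully rigorous you should run the computation with truncated weights $\chi_R(v)|v|^\ell$ and pass to the limit, because the finiteness and differentiability of the $\ell$-th moment are \emph{not} supplied by the majorant (which has infinite $\ell$-th moment); this is routine given $\|f_0\|_{L^1_\ell}<\infty$.

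The $L^\infty_\ell$ half has a genuine gap. You assert that $\|\tilde g(t)\|_{L^\infty_\ell}$ is bounded on $[1,\infty)$ by a constant depending only on $\|\tilde g_0\|_{L^1}$. This is false for the linear FP flow: take $g_0$ a unit mass concentrated near $|w|=R$ and evaluate $\calF[g_0](t,\cdot)$ at $v=e^{-t}w$; then $(1+|v|^\ell)g(t,v)\ge e^{-(\ell-2)t}R^\ell (2\pi\nu(t))^{-1}$, which is unbounded in $R$ for fixed $t\ge 1$. The uniform-in-time estimate that is actually available, \eqref{unif1} with $p=\infty$, requires $\|\tilde g_0\|_{L^1_\ell}<\infty$ --- and that is precisely what the chosen majorant lacks, since $(1+|v|^\ell)\tilde f_0(v)$ is constant and hence $\|\tilde f_0\|_{L^1_\ell}=\|\tilde g_0\|_{L^1_\ell}=\infty$. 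The step can be repaired: splitting the Mehler integral over $\{|w|\ge e^{t}|v|/2\}$, where $\tilde g_0(w)\le 2^\ell\|\tilde g_0\|_{L^\infty_\ell}e^{-\ell t}|v|^{-\ell}$, and its complement, where $|e^{t}v-w|\ge e^{t}|v|/2$ forces a factor $e^{-|v|^2/8}$ for $t\ge 1$, yields $\sup_{t\ge 1}\|\tilde g(t)\|_{L^\infty_\ell}\le C(\ell)\bigl(\|\tilde g_0\|_{L^\infty_\ell}+\|\tilde g_0\|_{L^1}\bigr)$, which closes the argument. But this mixed $L^1\cap L^\infty_\ell$ smoothing bound is not among the estimates you cite and must be proved; as written the step does not close. (For what it is worth, the paper's one-line appeal to \eqref{unif4} applied to the same majorant faces the identical obstruction.)
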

\begin{proof}[Proof]
  The local well-posedness is provided by Proposition~\ref{befp-results},
  so we only need to show that the norm $\|f(t)\|_{L^\infty_\ell}$
  does not blow up in finite time. Let $T$ be the maximal existence time of $f$ given by Proposition~\ref{befp-results}. We consider the radially symmetric
  solution $f^\#$ to \eqref{befp} corresponding to the initial condition
  $$
  f_0^\#(v)= \frac{\|f_0\|_{L^\infty_{\ell}}}{1+|v|^{\ell}}\ , \quad v\in\R^2\ ,
  $$
which we know to be globally defined by Theorem~\ref{thm:BEFP-radial}, since $f_0^\#$ has finite mass due to the condition $\ell > 2$. Clearly, $f_0 \leq f_0^\#$ and by the
  comparison principle in Proposition \ref{befp-results} we know that
  $f(t,v) \leq f^\#(t,v)$ for all $t \in [0,T)$ and $v \in \R^2$. Equation~\eqref{unif2} with $p=\infty$ gives us
  \begin{equation*}
    \|f(t)\|_{L^\infty_\ell}
    \leq
    \|f^\#(t)\|_{L^\infty_\ell}
    \leq
    C e^{2t} \|f_0^\#\|_{L^\infty_\ell}
  \end{equation*}
  for some $C>0$ which depends only on $\ell$
  and $\|f_0\|_{L^1}$. This estimate prevents the $L^\infty_\ell$-norm of $f(t)$ to explode in finite time, so the
  solution must in fact be globally defined, that is $T=\infty$. In order to obtain the additional statement \eqref{poum} it suffices to apply equation~\eqref{unif4} with $p=1$ and $p=\infty$.
\end{proof}

\medskip

We now turn to the large time behavior of solutions to \eqref{befp} and recall that equation~\eqref{befp}, in any dimension $d$, has an entropy functional
given by
\begin{equation}
  \label{eq:entropy}
  H(f) = \int_{\R^d} \left[ \frac{|v|^2}{2} f(v) + f(v) \log f(v) - (f(v)+1) \log (f(v)+1) \right] \d v
\end{equation}
in the following sense: any solution $f$ to \eqref{befp} with a bounded moment of order $\ell> 2$ satisfies
\begin{equation}
  \label{eq:entropy-dissipation}
  \ddt H(f) = - D(f) := - \int_{\R^d} f(1+f)(v)
  \left|
      v + \nabla \log \left( \frac{f}{1+f} \right)(v)
  \right|^2 \d v.
\end{equation}
The availability of this functional leads us to the following result:
\begin{thm}
  \label{thm:BEFP-asymptotic}
  Take a non-negative $f_0 \in L^\infty_\ell(\R^2) \cap L^1_\ell(\R^2)$
  with $\ell > 2$. Then the solution $f$ of the BEFP equation
  \eqref{befp} with initial condition $f_0$ given by Theorem
 ~\ref{thm:BEFP} satisfies
  \begin{equation*}
    \lim_{t\to\infty} \| f(t) - f_\infty^\beta \|_{L^1}
    = 0\ ,
  \end{equation*}
  where $\beta$ is given by \eqref{eq:beta-mass-b} in terms of the mass $m$ of $f_0$.
\end{thm}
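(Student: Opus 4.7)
My plan is to implement the standard entropy method, using the dissipation identity \eqref{eq:entropy-dissipation} together with the Csisz\'ar-Kullback-type inequality of Lemma~\ref{lem:ck}. The hypotheses $f_0\in L^1_\ell(\R^2)\cap L^\infty_\ell(\R^2)$ with $\ell>2$ ensure that $H(f_0)$ is finite: the moment part $\int_{\R^2} |v|^2 f_0\d v$ is controlled since $\ell>2$, while the nonlinear integrand $h(f):=f\log f-(1+f)\log(1+f)$ is pointwise bounded by $C f(1+|\log f|)$ and hence integrable thanks to the $L^\infty_\ell$-decay of $f_0$. The global bounds \eqref{poum} from Theorem~\ref{thm:BEFP} justify the integration by parts behind \eqref{eq:entropy-dissipation}, giving
\begin{equation*}
H(f(t))+\int_0^t D(f(s))\d s = H(f_0)\ , \qquad t\ge 0.
\end{equation*}
A convexity argument (using that $h$ is convex and that $f_\infty^\beta$ solves the Euler-Lagrange equation associated with minimising $H$ under the mass constraint $\int f=m$) shows $H(g)\ge H(f_\infty^\beta)$ for every non-negative $g$ with $\|g\|_{L^1}=m$. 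Consequently $H(f(t))$ decreases monotonically to some $H^\infty\ge H(f_\infty^\beta)$ and $\int_0^\infty D(f(s))\d s<\infty$.

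Next, I would extract a sequence $t_n\to\infty$ with $D(f(t_n))\to 0$ and identify the limit. Thanks to \eqref{poum}, the family $\{f(t_n)\}$ is bounded in $L^1_\ell(\R^2)\cap L^\infty_\ell(\R^2)$, hence uniformly tight and equi-integrable, so Dunford-Pettis provides a subsequence (not relabelled) converging weakly in $L^1(\R^2)$ to some $f^*$. Parabolic regularity applied to \eqref{befp-pde}, combined with the $L^\infty_\ell$-domination, upgrades this to locally uniform convergence of $f(t_n)$ and $\nabla f(t_n)$ on compact sets; tightness then gives strong convergence in $L^1(\R^2)$ and $\|f^*\|_{L^1}=m$. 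Passing to the limit in $D$ on compact sets in this stronger mode of convergence, and using $D(f(t_n))\to 0$ together with the non-negativity of $D$, yields $D(f^*)=0$. Since $D(f^*)=0$ is equivalent to $\nabla\log(f^*/(1+f^*))=-v$ wherever $f^*>0$, the mass constraint pins down $f^*=f_\infty^\beta$ with $\beta$ given by \eqref{eq:beta-mass-b}.

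Finally, the strong $L^1$ convergence $f(t_n)\to f_\infty^\beta$ together with the uniform domination inherited from $L^\infty_\ell$ allows dominated convergence in the integrals defining $H$, giving $H(f(t_n))\to H(f_\infty^\beta)$. Monotonicity of $t\mapsto H(f(t))$ then forces $H^\infty=H(f_\infty^\beta)$, so that the relative entropy $H(f(t))-H(f_\infty^\beta)$ decreases to $0$ along the full trajectory, and Lemma~\ref{lem:ck} converts this into the announced convergence $\|f(t)-f_\infty^\beta\|_{L^1}\to 0$. The main obstacle I anticipate is the second paragraph: one needs parabolic regularity sharp enough to promote the mere weak $L^1$-compactness of $\{f(t_n)\}$ to a mode of convergence in which $D(f(t_n))\to 0$ transfers to $D(f^*)=0$. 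Indeed, $D$ is not obviously jointly convex in $(f,\nabla f)$, so weak lower semicontinuity is not automatic, and local smoothness estimates for \eqref{befp-pde}, exploiting the uniform $L^\infty_\ell$ bound, will be the key ingredient for closing this gap.
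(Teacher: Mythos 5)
Your overall architecture (entropy dissipation, a sequence of times along which $D$ vanishes, identification of the limit, entropy convergence plus the Csisz\'ar--Kullback inequality of Lemma~\ref{lem:ck}) matches the paper's, but the step you yourself flag as the ``main obstacle'' is a genuine gap, and it is exactly where the paper does something different and more elementary. You propose to obtain $D(f^*)=0$ by upgrading weak $L^1$ compactness to locally uniform convergence of $f(t_n)$ \emph{and} $\nabla f(t_n)$ via parabolic regularity and then passing to the limit in $D$; none of this is carried out, the required uniform $C^1_{\mathrm{loc}}$ estimates for the quasilinear equation \eqref{befp-pde} are not established anywhere (they would need a De Giorgi--Nash/Schauder bootstrap on top of the $L^\infty$ bounds of Theorem~\ref{thm:BEFP}), and, as you note, $D$ is not obviously lower semicontinuous under the convergences you actually have. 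The paper never passes to the limit in $D$ at all. It works with $h_n:=f_n/(1+f_n)$ and observes, by Cauchy--Schwarz, that
\begin{equation*}
\left( \int_{\R^2} \left| v h_n(v) + \nabla h_n(v) \right| \d v \right)^2
\leq \|h_n\|_{L^1} \int_{\R^2} h_n(v)\left| v + \frac{\nabla h_n}{h_n}(v)\right|^2 \d v
\leq m\, D(f_n) \longrightarrow 0 ,
\end{equation*}
so that $v h_n + \nabla h_n \to 0$ \emph{strongly} in $L^1(\R^2)$; combined with the mere weak convergence of $h_n$ to its limit $h^*$, this gives $vh^*+\nabla h^*=0$ in the sense of distributions, hence $h^*=\tilde\beta e^{-|v|^2/2}$ and $f^*=f_\infty^{\beta}$ after matching masses via \eqref{pom}. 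No regularity theory for \eqref{befp-pde} is needed.

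The same remark applies to the strong $L^1$ convergence of $f(t_n)$, which you also delegate to the unproved parabolic estimates: the paper extracts compactness directly from the dissipation by expanding the square in $D(f_n)$,
\begin{equation*}
D(f_n) \ge \int_{\R^2} \frac{|\nabla f_n|^2}{f_n(1+f_n)}(v)\d v - 4m ,
\end{equation*}
so that $\{\log(f_n/(1+f_n))\}_n$ is bounded in $H^1(\R^2)$, yielding an a.e.\ convergent subsequence, after which Vitali's theorem together with the uniform bounds \eqref{poum} upgrades this to strong $L^p$ convergence. To salvage your route you would have to actually prove the uniform interior regularity you invoke; otherwise the second paragraph should be replaced by the two dissipation-based estimates above. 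The remaining steps of your proposal (finiteness and monotonicity of $H$, convergence $H(f(t_n))\to H(f_\infty^\beta)$ by dominated convergence, monotonicity forcing $H^\infty=H(f_\infty^\beta)$, and Lemma~\ref{lem:ck}) coincide with the paper's Step~3 and are fine.
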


\begin{rem}
  Due to Theorem~\ref{thm:BEFP} solutions to \eqref{befp} emanating from non-negative initial data in $L^\infty_\ell(\R^2) \cap L^1_\ell(\R^2)$
  with $\ell > 2$ are bounded in $L^\infty$
  globally in time and a simple interpolation argument shows that for
  all $1 \leq p < \infty$ we also have
  \begin{equation*}
    \lim_{t\to\infty} \| f(t) - f_\infty^\beta \|_p = 0\ .
  \end{equation*}
\end{rem}

\begin{proof}[Proof of Theorem~\ref{thm:BEFP-asymptotic}]
  We follow a common technique to deduce convergence to equilibrium from the existence of a Lyapunov functional such as \eqref{eq:entropy}; see for example \cite{DD,BCCP}.
  
Take a solution $f = f(t,v)$ of the BEFP equation \eqref{befp} with initial condition $f_0$ and recall that Theorem~\ref{thm:BEFP} guarantees that $\{f(t)\ :\ t\ge 1\}$ is bounded in $L^\infty_\ell(\R^2) \cap L^1_\ell(\R^2)$.

  \smallskip
  \noindent
  \textbf{Step 1: Strong convergence of $f(t_n)$ for some
    sequence $\{t_n\}_{n\ge 1}$ of times.} From \eqref{eq:entropy-dissipation} (which we can use since we are assuming $f_0 \in L^1_\ell(\R^2)$ for $\ell>2$)
  and the fact that $H(f(t))$ is bounded below by $H(f_\infty^\beta)$
  (due to Lemma~\ref{lem:ck} below) we see that there must exist a divergent
  sequence $\{t_n\}_{n \geq 1}$, $t_n\ge 1$, such that $D(f(t_n)) \to 0$ as $n \to
  \infty$. Due to \eqref{poum} we know that there exist a subsequence of
  $\{t_n\}_{n \geq 1}$ (which we still call $\{t_n\}$ by an abuse of
  notation) and a non-negative function $f_\infty \in L^1(\R^2) \cap
  L^\infty(\R^2)$ such that $f(t_n) \rightharpoonup f_\infty$ weakly
  in $L^p(\R^2)$, for all $p \in [1,\infty)$, and also in the
  weak-$*$ topology of $L^\infty(\R^2)$. We also note that the bounds \eqref{poum} ensure that 
\begin{equation}
\int_{\R^2} f_\infty(v) \d v = \lim_{n\to\infty} \int_{\R^2} f(t_n,v) \d v = m := \|f_0\|_{L^1}\ .\label{pom}
\end{equation}

Setting $f_n:=f(t_n)$, $n\ge 1$, let us now see that the convergence of $\{f_n\}_{n\ge 1}$ must in fact be strong, due to the fact that $D(f_n) \to 0$. Indeed, expanding the square
\begin{align*}
    D(f_n) & = \int_{\R^2} f_n(1+f_n)(v) |v|^2 \d v
+ \int_{\R^2} \frac{|\nabla f_n|^2}{f_n(1+f_n)}(v) \d v  
    + 2 \int_{\R^2} v \cdot \nabla f_n(v) \d v \\
 & \geq \int_{\R^2} \frac{|\nabla f_n|^2}{f_n(1+f_n)}(v) \d v 
    - 4 m.
  \end{align*}
  Since $D(f_n) \to 0$, this shows that the sequence $\{\log
  (f_n/(1+f_n))\}_{n \geq 1}$ is bounded in $H^1(\R^2)$. In
  particular, there is a subsequence of $\{f_n\}_{n \geq 1}$ (again denoted by $\{f_n\}$) which converges pointwise a.e. to $f_\infty$ and weak limits coincide. Vitali's theorem then shows that, after possibly extracting a further subsequence, we can conclude that 
\begin{equation*}
    f_n \to f_\infty \quad \text{ strongly in $L^p(\R^2)$ as $n \to \infty$,}
\end{equation*}
for all $1\leq p <\infty$.

  \smallskip
  \noindent
  \textbf{Step 2: Proof that $f_\infty = f_\infty^\beta$.}  Define $h_n := f_n / (1+f_n)$, $n\ge 1$. From the previous step, we know that $h_n$ converges pointwise a.e. to $h_\infty := f_\infty/(1+f_\infty)$. Since $h_n \leq f_n$ is clearly uniformly bounded in all $L^p(\R^2)$ ($p \in [1,\infty]$), there is another subsequence of $\{t_n\}_{n \geq 1}$ such that $\{h_n\}_{n\ge 1}$ converges weakly in all $L^p(\R^2)$, and weakly-$*$ in $L^\infty(\R^2)$ to $h_\infty$. Now we have, by the Cauchy-Schwarz inequality,
  \begin{align*}
    \left( \int_{\R^2} \left| v h_n(v) + \nabla h_n(v) \right|  \d v \right)^2
   & \leq
    \left( \int_{\R^2} h_n(v) \left| v + \frac{\nabla h_n}{h_n}(v) \right|^2 \d v \right) \|h_n\|_{L^1} \\
   & \leq m\, D(f_n) \to 0
  \end{align*}
  as $n \to \infty$. This implies
  that $v h_n + \nabla h_n \to 0$ in $L^1(\R^2)$ as $n \to \infty$ which, together with the already established convergences of $h_n$ to $h_\infty$ implies that $v h_\infty(v) + \nabla h_\infty(v) = 0$ in the sense of distributions and in fact almost everywhere. Consequently, $h_\infty(v) = \tilde{\beta} \exp(-|v|^2/2)$ for a.e. $v\in\R^2$ for some $\tilde{\beta} \ge 0$ and thus $f_\infty = f_\infty^{\tilde{\beta}}$ a.e. in $\R^2$. Finally, $\tilde{\beta} = \beta$ is given by \eqref{eq:beta-mass-b} since the mass of $f_\infty$ is the same as that of $f_0$ by \eqref{pom}.

  \smallskip
  \noindent
  \textbf{Step 3: Convergence of the entropy.} Since $t \mapsto
  H(f(t))$ is nonincreasing and larger than $H(f_\infty^\beta)$ (see
  Lemma~\ref{lem:ck} below) there is $H_\infty \geq H(f_\infty^\beta)$ such
  that
  \begin{equation}
    \label{eq:entropy-converges}
    H(f(t)) \to H_\infty \quad \text{ as $t \to \infty$.}
  \end{equation} 
  From the previous two steps we know that $\{f_n=f(t_n)\}_{n\ge 1}$
  converges pointwisely to $f_\infty^\beta$ while \eqref{poum} ensures
  that $\{f_n\}_{n\ge 1}$ is bounded in
  $(L_\ell^1\cap L^\infty_\ell)(\R^2)$, hence also in
  $L^2_\ell(\R^2)$. Since $\ell>2$ the Dominated Convergence Theorem
  now shows that $H(f_n) \to H(f_\infty^\beta)$, which implies that
  $H_\infty = H(f_\infty^\beta)$.

  Using Lemma~\ref{lem:ck} we directly deduce from \eqref{eq:entropy-converges}
  that $\|f(t) - f_\infty^\beta\|_1 \to 0$ as $t \to \infty$,
  finishing the proof.
\end{proof}

We finally give a Csiszár-Kullback inequality for our entropy
functional, which was needed to show that convergence in entropy
implies convergence in $L^1$ norm in the proof of Theorem
\ref{thm:BEFP-asymptotic}. We would like to warmly thank the anonymous
referee for suggesting an improvement of the following result in the
preprint version of this paper.

\begin{lem}[Csiszár-Kullback-type inequality for the BEFP entropy]
  \label{lem:ck}
  Take a non-negative function $f \in L^1(\R^d)$ with mass $\|f\|_1
  \leq m_\mathrm{c}$, and $\beta > 0$ such that the mass of $f_\infty^\beta$
  is equal to that of $f$. Then
  \begin{equation}
    \label{eq:ck}
    H(f) - H(f_\infty^\beta)
    \geq
    C \|f - f_\infty^\beta\|_1^2
  \end{equation}
  for some $C > 0$ which depends only on $\|f_\infty^\beta\|_1$ and
  $\|f_{\infty}^{\beta}\|_{2}$.
\end{lem}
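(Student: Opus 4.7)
The plan is to view $H(f)-H(f_\infty^\beta)$ as the spatial integral of a pointwise relative entropy and combine a Taylor-type lower bound with mass conservation. Set $g:=f_\infty^\beta$ and introduce
\[
\phi_B(s,t) := s\log(s/t) - (1+s)\log\!\left(\frac{1+s}{1+t}\right),\qquad s,t\ge 0.
\]
Since $\nabla_v\log(g/(1+g))=-v$ gives $\log(g/(1+g))=-|v|^2/2-\log\beta$, a direct computation using that $\|f\|_1=\|g\|_1$ shows that $H(f)-H(g)=\int_{\R^d}\phi_B(f(v),g(v))\,\mathrm{d}v$, so the task is to estimate the right-hand side from below by $C\|f-g\|_1^2$.

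Next I would derive the pointwise quadratic lower bound
\[
\phi_B(s,t) \;\ge\; \frac{(s-t)^2}{2\max(s,t)\bigl(1+\max(s,t)\bigr)}.
\]
This follows from $\partial_s\phi_B(t,t)=0$, $\partial_s^2\phi_B(s,t)=1/[s(1+s)]$ and Taylor's formula with integral remainder $\phi_B(s,t)=\int_t^s (s-u)/[u(1+u)]\,\mathrm{d}u$, together with the observation that $u(1+u)\le\max(s,t)(1+\max(s,t))$ throughout the range of integration.

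Now split $\R^d=\{f\le 3g\}\cup\{f>3g\}$. On $\{f\le 3g\}$, one has $\max(f,g)\le 3g$ and hence $\max(f,g)(1+\max(f,g))\le 9\,g(1+g)$, so the pointwise bound yields $\phi_B(f,g)\ge (f-g)^2/[18\,g(1+g)]$. Cauchy--Schwarz then gives
\[
\left(\int_{\{f\le 3g\}}|f-g|\,\mathrm{d}v\right)^{\!2}
\le \int\frac{(f-g)^2}{g(1+g)}\,\mathrm{d}v\cdot\int g(1+g)\,\mathrm{d}v
\le 18\bigl(H(f)-H(g)\bigr)\bigl(\|g\|_1+\|g\|_2^2\bigr).
\]
For the complementary region I exploit that $f$ and $g$ share the same mass: $\int_{\R^d}(f-g)\,\mathrm{d}v=0$, so
\[
\int_{\{f>3g\}}|f-g|\,\mathrm{d}v \;=\;\int_{\{f>3g\}}(f-g)\,\mathrm{d}v \;=\; -\!\int_{\{f\le 3g\}}(f-g)\,\mathrm{d}v \;\le\;\int_{\{f\le 3g\}}|f-g|\,\mathrm{d}v,
\]
since $f>g$ on $\{f>3g\}$. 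Consequently $\|f-g\|_1\le 2\int_{\{f\le 3g\}}|f-g|\,\mathrm{d}v$, and inserting the Cauchy--Schwarz estimate yields $\|f-g\|_1^2\le 72\,\bigl(\|g\|_1+\|g\|_2^2\bigr)\bigl(H(f)-H(g)\bigr)$, which is \eqref{eq:ck} with $C=1/[72(\|g\|_1+\|g\|_2^2)]$.

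The main obstacle in the argument is that the Taylor lower bound degenerates for large $f$ (the weight $1/[s(1+s)]$ decays like $1/s^2$), so the pointwise estimate alone cannot control the tail $\{f\gg g\}$ through an $L^2$-type Cauchy--Schwarz with a weight depending only on $g$. The resolution is to renounce a direct pointwise control there and instead transfer the mass excess on $\{f>3g\}$ back to the region $\{f\le 3g\}$ via the mass-conservation identity; the constant $3$ (any number $>1$ works) is chosen only to ensure $f>g$ on the tail, which is what makes $|f-g|=(f-g)$ and the sign-flip exact.
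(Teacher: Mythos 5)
Your proof is correct and follows essentially the same route as the paper's: both write $H(f)-H(f_\infty^\beta)$ as the integral of the Bregman divergence of $\phi(r)=r\log r-(1+r)\log(1+r)+\tfrac{|v|^2}{2}r$ (using that $\phi'(f_\infty^\beta)$ is constant together with mass conservation), apply the Taylor lower bound with weight $\phi''=1/[r(1+r)]$ on the region where $f$ is controlled by $f_\infty^\beta$, use Cauchy--Schwarz there, and recover the full $L^1$ norm from $\int (f-f_\infty^\beta)=0$. The only cosmetic difference is your cut at $\{f\le 3f_\infty^\beta\}$ versus the paper's $\{f\le f_\infty^\beta\}$ (where the sign-flip identity is exact rather than an inequality), which merely changes the constant.
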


\begin{proof}
  We follow a well-known strategy for proving Csiszár-Kullback-type
  inequalities; see for example \cite{CCD02}.
  For each $v \in \R^d$ call
  \begin{equation*}
    \phi(r) \equiv \phi(r,v) := r \log r - (r+1)\log(r+1) +
    \frac{|v|^2}{2}r,
    \qquad r > 0,
  \end{equation*}
  so that we have
  \begin{equation*}
    H(f) = \ird \phi(f(v))\d v.
  \end{equation*}
  Using that $\phi'(f_\infty^\beta(v))$ is a constant independent of
  $v \in \R^d$ we can write
  \begin{multline}
    \label{eq:1}
    H(f) - H(f_\infty)
    \\
    = \ird \Big(\phi(f(v)) -
    \phi(f_\infty^\beta(v)) - \phi'(f_\infty^\beta(v))(f(v) - f_\infty^\beta(v)) \Big)
    \d v.
  \end{multline}
  Now we notice that $\phi$ is a convex function with
  $\phi''(r) = (r (1+r))^{-1}$, so we have the bound
  \begin{equation*}
    \phi(r) - \phi(s) - \phi'(s)(r - s) \geq \frac{(r-s)^2}{s(1+s)}
    \geq 0
  \end{equation*}
  whenever $0 < r \leq s$. Using this in \eqref{eq:1} and denoting
  $\Omega := \{v \in \R^d \mid f(v) \leq f_\infty^\beta(v)\}$ we
  obtain (writing for convenience $f \equiv f(v)$ and
  $f_\infty^\beta \equiv f_\infty^\beta(v)$ inside the integral)
  \begin{multline*}
    H(f) - H(f_\infty)
    \geq
    \int_\Omega \Big(\phi(f) -
    \phi(f_\infty^\beta) - \phi'(f_\infty^\beta)(f - f_\infty^\beta )\Big)
    \d v
    \\
    \geq
    \int_\Omega \frac{(f - f_\infty^\beta)^2}{f_\infty^\beta (1
      + f_\infty^\beta)} \d v
    \geq
    \left( \int_\Omega (f_\infty^\beta - f ) \d v \right)^2
    \left( \ird f_\infty^\beta
      (v) (1 + f_\infty^\beta) \d v \right)^{-2},
  \end{multline*}
  where for the last step we used the Cauchy-Schwarz inequality.
  Calling
  $$C := \frac{1}{4} \left( \ird f_\infty^\beta (1 + f_\infty^\beta) \d
  v\right)^{-2}$$
  and observing that
  \begin{equation*}
    \int_\Omega (f_\infty^\beta - f) \d v = \frac{1}{2} \|f_\infty^\beta - f\|_1
  \end{equation*}
  this gives precisely
  \begin{equation*}
    H(f) - H(f_\infty)
    \geq
    C \|f - f_\infty^\beta\|_1^2.
  \end{equation*}
\end{proof}

\subsection*{Acknowledgements}

The authors would like to warmly thank the anonymous referee for
useful comments. The authors acknowledge support from the
Spanish project MTM2011-27739-C04-02. J.~A. Ca\~nizo acknowledges
support from the Marie-Curie CIG project KineticCF and the Spanish
project MTM2014-52056-P. J.~A. Carrillo acknowledges support from the
Royal Society through a Wolfson Research Merit Award and the
Engineering and Physical Sciences Research Council (UK) grant number
EP/K008404/1.

\bibliographystyle{abbrv}
\bibliography{bibliography}



\end{document}